\newcommand{\dom}{\ensuremath{\operatorname{dom}}}
\newcommand{\prox}{\ensuremath{\operatorname{prox}}}
\newcommand{\gammaTo}{\ensuremath{\overset{\Gamma}{\to}}}
\newcommand{\moscoTo}{\ensuremath{\overset{\textrm{M}}{\to}}}
\newcommand{\weakto}{\ensuremath{\stackrel{w}{\to}}}
\newcommand{\mR}{\ensuremath{\mathbb{R}}}
\newcommand{\mN}{\ensuremath{\mathbb{N}}}
\newcommand{\HH}{\mathcal{H}}
\newcommand{\argmin}{\mathop{\rm argmin}}
\newcommand{\di}{d}
\newcommand*{\trez}[1]{\tfrac{1}{#1}}     
\newcommand{\fun}{\colon\HH\to(-\infty,+\infty]}
\newtheorem{theorem}{Theorem}[section]
\newtheorem{lemma}[theorem]{Lemma}
\newtheorem{remark}[theorem]{Remark}
\newtheorem{corollary}[theorem]{Corollary}
\begin{document}
\title{Convergence of Functions and their Moreau Envelopes on Hadamard Spaces}
\author{Miroslav Ba{\v c}{\'a}k}
\address{Max Planck Institute for Mathematics in the Sciences, Inselstr. 22, 04 103 Leipzig, Germany}
\email{bacak@mis.mpg.de}

\author{Martin Montag}
\address{University of Kaiserslautern, Department of Mathematics, Kaiserslautern and Fraunhofer ITWM, Kaiserslautern, Germany}
\email{m.j.montag@posteo.de}

\author{Gabriele Steidl}
\address{University of Kaiserslautern, Department of Mathematics, Kaiserslautern and Fraunhofer ITWM, Kaiserslautern, Germany}
\email{steidl@mathematik.uni-kl.de}

\subjclass[2010]{Primary: 49J53. Secondary: 58E30.}
\keywords{Convex function, Hadamard space, Moreau envelope, Mosco convergence.}

\hyphenation{weight-ed Hada-mard}
  
\date{}

\begin{abstract}
A well known result of H. Attouch states that the Mosco convergence
of a sequence of proper convex lower semicontinuous functions
defined on a Hilbert space is equivalent to the pointwise convergence
of the associated Moreau envelopes.
In the present paper we generalize this result to Hadamard spaces.
More precisely, while it has already been known that the Mosco convergence of a sequence 
of convex lower semicontinuous functions on a Hadamard space implies 
the pointwise convergence of the corresponding Moreau envelopes, 
the converse implication was an open question. We now fill this gap.  

Our result has several consequences. It implies, for instance, the equivalence of the Mosco and Frol\'{i}k-Wijsman convergences of convex sets.
As another application, we show that there exists a~complete metric on the cone of proper convex lower semicontinuous 
functions on a separable Hadamard space such that a~sequence of functions converges in this metric if and only if it converges in the sense of Mosco.
\end{abstract}

\maketitle

\section{Introduction} \label{sec:intro}
%
Proximal mappings and Moreau envelopes of convex functions play a central role
in convex analysis and moreover they appear in various minimization algorithms which have recently 
found application in, for instance, signal/image processing and machine learning. 
For overviews, see for instance \cite{BC11,BSS2014,combettespesquet,PB2013} 
and for proximal algorithms on Hadamard manifolds, e.g., \cite{AF05,FO02,QO09,BFO15a}.

In the present paper we are concerned with the relation between Moreau envelopes and the Mosco convergence. 
Specifically, a well known result of H.~Attouch says that the Mosco convergence of a sequence of 
convex lower semicontinuous  functions on a Hilbert space 
is completely characterized by the pointwise convergence of their Moreau envelopes \cite[Theorem 1.2]{Attouch79}. 
Note that this result was later on extended into a certain class of Banach spaces \cite[Theorem~3.26]{Attouch84}. 

We briefly recall the result in a Hilbert space $H$. To this end we first fix our notation and terminology. 
A basic reference on variational analysis is~\cite{RW2009}. 
The \emph{domain} of a function $f\colon H \to (-\infty,+\infty]$ is the set $\dom f := \{ x \in H\colon f(x) < +\infty\}$ and $f$ is \emph{proper} if $\dom f \not = \emptyset$. A function $f\colon H \to (-\infty,+\infty]$ is called \emph{lower semicontinuous} (lsc) if the level sets $\{x \in H\colon f(x) \le \alpha\}$ are closed for every $\alpha \in \mR$. Let $\Gamma_0(H)$ denote the cone of proper convex lsc functions defined on a Hilbert space~$H$. For a proper convex lsc function $f\in \Gamma_0(H)$ and $\lambda>0,$ the \emph{proximal mapping} $J_{\lambda}^f \colon H \to H$ is defined by
\begin{equation} \label{eq:prox}
	J_{\lambda}^f(x) = \argmin_{y\in H}
		 \Bigl\{ f(y) + \tfrac{1}{2\lambda} d(x,y)^2\Bigr\}, \qquad x\in H,
\end{equation}
where $d(x,y) := \|x-y\|$. Indeed, the minimizer of the right-hand side exists and is unique. For a~fixed~$f,$ we will write just $J_\lambda$ instead of $J_{\lambda}^f$. We also mention that an alternative symbol $\prox_{\lambda f}$ for the proximal mapping appears frequently in the literature, but we will not use it in the present paper.

Given $\lambda > 0$, the \emph{Moreau envelope} of $f$ is defined by
\begin{equation}		\label{eq:moreau}
  f_\lambda (x) := \min_{y\in H} \Bigl\{ f(y) + \tfrac{1}{2\lambda} d(x,y)^2 \Bigr\}, \qquad x\in H.
\end{equation}
The following definition goes back to U.~Mosco~\cite{Mosco69}. A sequence $\{f_n\}_{n}$ of functions $f_n \colon H\to (-\infty,+\infty]$ \emph{Mosco-converges} to $f \colon  H \to (-\infty,+\infty]$, abbreviated $ f_n \moscoTo f$,
if, for each $x \in H$, the following two conditions are fulfilled:
\begin{enumerate}
 \item[i)] $f(x) \leq \liminf_{n \to \infty} f_n(x_n)$ whenever  $x_n \weakto x$, 
 \item[ii)] there is a sequence $\{y_n\}_n$ such that $y_n \to x $ and $f_n(y_n) \to f(x),$
\end{enumerate}
where $x_n \weakto x$ stands for weak convergence. There is a weaker type of convergence, known as $\Gamma$-convergence, in which we just replace the first statement in the above definition by
\begin{enumerate}
 \item[i)] $f(x) \leq \liminf_{n \to \infty} f_n(x_n)$, whenever  $x_n \to x$.
\end{enumerate}
We refer the reader to~\cite{Braides02,maso} for further information on $\Gamma$-convergence. Obviously, $\Gamma$-convergence can be defined on an arbitrary topological space. It is a suitable notion in the study of minimization problems because minimizers of the limit function~$f$
are related to approximate minimizers of the functions $f_n$,
see~\cite[Theorem~1.21]{Braides02}, or, in general topological spaces~\cite[Theorem~1.10]{Attouch84}.  
The notion of $\tau$-epi convergence~\cite[Definition~1.9]{Attouch84} is just another name for $\Gamma$-convergence; see \cite[Proposition~1.18]{Braides02}.

We are now able to present the promised result due to H.~Attouch; see \cite[Theorem 1.2]{Attouch79} and \cite[Theorem~3.26]{Attouch84}.
\begin{theorem}  \label{thm:Attouch-MoreauMosco}
Let $H$ be a Hilbert space and $\left\{f_n\right\}_n \subset\Gamma_0(H)$.
Then the following statements are equivalent:
\begin{itemize}
\item[{\rm i)}] The sequence $\{f_n\}_n$ converges to a proper function $f\colon H\to (-\infty,+\infty]$ 
  in the sense of Mosco. 
\item[{\rm ii)}] For each $\lambda >0$, the sequence of Moreau envelopes 
  $\{ f_{n,\lambda} \}_n$ of $\{ f_n \}_n$ converges pointwise
  to the Moreau envelope $f_\lambda$ of a proper convex lsc function $f$.
\end{itemize}
\end{theorem}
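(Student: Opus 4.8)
This is Attouch's classical theorem, so I only outline the route and point to where the work concentrates. Throughout I use the Hilbert-space identities $f_{n,\lambda}(x)=f_n\big(J_\lambda^{f_n}(x)\big)+\tfrac1{2\lambda}d\big(x,J_\lambda^{f_n}(x)\big)^2$ and $\nabla f_{n,\lambda}=\tfrac1\lambda\big(I-J_\lambda^{f_n}\big)$ (the latter being $\tfrac1\lambda$-cocoercive), together with the analogous identities for~$f$. Of the two implications, the first is soft (and, as the abstract recalls, already known to survive in Hadamard spaces), while the second carries the genuine difficulty.

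\textbf{i)$\Rightarrow$ii).} One first checks that the Mosco limit $f$ lies in $\Gamma_0(H)$: lower semicontinuity of any Mosco limit is automatic, and midpoint convexity follows by feeding two recovery sequences into condition~i). Fixing $x$ and $\lambda$, the bound $\limsup_n f_{n,\lambda}(x)\le f_\lambda(x)$ comes from a recovery sequence $y_n\to J_\lambda^{f}(x)$ with $f_n(y_n)\to f\big(J_\lambda^{f}(x)\big)$ via $f_{n,\lambda}(x)\le f_n(y_n)+\tfrac1{2\lambda}d(x,y_n)^2$. For the reverse bound the point is that $\{J_\lambda^{f_n}(x)\}_n$ is bounded: otherwise, using the previous estimate, a recovery sequence at some $x_0\in\dom f$, and convexity along segments, one manufactures a bounded sequence on which $f_n\to-\infty$, contradicting condition~i) together with $f>-\infty$. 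Given boundedness, pass along a subsequence to a weak cluster point $z$; condition~i) and weak lower semicontinuity of $d(x,\cdot)^2$ give $\liminf_n f_{n,\lambda}(x)\ge f(z)+\tfrac1{2\lambda}d(x,z)^2\ge f_\lambda(x)$, and combining the two bounds yields $f_{n,\lambda}(x)\to f_\lambda(x)$.

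\textbf{ii)$\Rightarrow$i).} Now $f$ is proper convex lsc and $f_{n,\lambda}\to f_\lambda$ pointwise for all $\lambda>0$. The plan is: (a) upgrade this to \emph{strong, pointwise} convergence of proximal maps, $J_\lambda^{f_n}(x)\to J_\lambda^{f}(x)$ for all $x,\lambda$; (b) deduce condition~i): from $f_n\ge f_{n,\lambda}$ and convexity, $f_n(x_n)\ge f_{n,\lambda}(x)+\langle\nabla f_{n,\lambda}(x),x_n-x\rangle$, and since $f_{n,\lambda}(x)\to f_\lambda(x)$ while $\nabla f_{n,\lambda}(x)=\tfrac1\lambda(x-J_\lambda^{f_n}(x))\to\nabla f_\lambda(x)$ strongly, the inner product vanishes along any $x_n\weakto x$, so letting $\lambda\downarrow0$ and using $f_\lambda(x)\uparrow f(x)$ gives condition~i); (c) deduce condition~ii): when $f(x)=+\infty$ the constant sequence $y_n=x$ works, since $f_n(x)\to f(x)=+\infty$ by (b); when $f(x)<+\infty$, a diagonal argument over $\lambda=\tfrac1k$ produces $\lambda_n\downarrow0$ with $f_{n,\lambda_n}(x)\to f(x)$, and then $y_n:=J_{\lambda_n}^{f_n}(x)$ satisfies $d(x,y_n)^2=2\lambda_n\big(f_{n,\lambda_n}(x)-f_n(y_n)\big)\to0$ (using an eventual affine minorant of $f_n$, itself produced from the envelope convergence, to bound $f_n(y_n)$ below) and $f_n(y_n)\to f(x)$.

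\textbf{The main obstacle.} Step (a) is the crux. Weak convergence $\nabla f_{n,\lambda}(x)\weakto\nabla f_\lambda(x)$ falls out of convexity, but the envelopes are only $C^{1,1}$, not equi-Lipschitz, and carry no compactness forcing strong convergence by a soft argument; in a Hilbert space this gap is closed by the theory of resolvents of maximal monotone operators and Fenchel conjugation, applied to $\partial f_n$ and $\partial f$. That machinery is exactly what is unavailable on a Hadamard space, which is why the implication ii)$\Rightarrow$i) there had remained open.
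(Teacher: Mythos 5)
Your overall architecture for ii)$\Rightarrow$i) (prox convergence $\Rightarrow$ liminf inequality via the envelope subgradient inequality $\Rightarrow$ recovery sequences via diagonalization over $\lambda_n\downarrow 0$) matches the paper's proof of its Hadamard-space Theorem~\ref{thm:moreau2mosco}, and your sketch of i)$\Rightarrow$ii) is the standard argument (the paper simply cites \cite[Theorem 4.1]{bac15} for that direction). But there is a genuine gap: your step (a), the strong convergence $J_\lambda^{f_n}(x)\to J_\lambda^{f}(x)$, which you yourself identify as the crux, is never proved --- it is delegated to ``resolvents of maximal monotone operators and Fenchel conjugation,'' and you assert that no soft argument is available. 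That assertion is what is wrong, and it hides the one idea your proposal is missing. The elementary route is: first run your step (c) (diagonalization plus the quadratic minorant $f_n\ge -r(d(\cdot,x_0)^2+1)$ coming from envelope convergence at a single point, the paper's Lemma~\ref{thm:moreau-equiMinorization}) to produce, for \emph{any} $z\in\dom f$, a sequence $y_n\to z$ with $\limsup_n f_n(y_n)\le f(z)$; this step needs no prox convergence. Then apply it at $z=J_\lambda^f(x)$ and combine with the strong-convexity characterization of the prox, $f_n(y)+\tfrac1{2\lambda}d(x,y)^2\ \ge\ f_{n,\lambda}(x)+\tfrac1{2\lambda}d\bigl(J_\lambda^{f_n}x,\,y\bigr)^2$ (inequality~\eqref{eq:ineq}; in Hilbert space this is immediate from $1/\lambda$-strong convexity of the objective). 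Taking $y=y_n$ and using $f_{n,\lambda}(x)\to f_\lambda(x)$ squeezes $d\bigl(J_\lambda^{f_n}x,y_n\bigr)\to 0$, hence $J_\lambda^{f_n}x\to J_\lambda^f x$. This is exactly the paper's Step~1/Step~2, and since it uses only~\eqref{eq:ineq} and the minorization lemma, it survives verbatim in Hadamard spaces --- which is precisely how the paper settles the previously open direction; so the machinery you invoke is not only unproved in your write-up, it is also unnecessary, and the correct fix is simply to reorder your steps (c)\,$\to$\,(a)\,$\to$\,(b).

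Two smaller points. In (c) you invoke ``an eventual affine minorant of $f_n$ produced from the envelope convergence'': what envelope convergence at a point actually yields is the \emph{quadratic} minorant of Lemma~\ref{thm:moreau-equiMinorization}, not a uniform affine one; the quadratic bound suffices for $d(x,y_n)\to 0$ (since $\bigl(\tfrac1{2\lambda_n}-r\bigr)d(x,y_n)^2$ stays bounded), so state it that way. On the other hand, your step (b) is genuinely cleaner than the paper's Step~3 in the Hilbert setting: using $f_n(x_n)\ge f_{n,\lambda}(x)+\langle\nabla f_{n,\lambda}(x),x_n-x\rangle$ you reach $\liminf_n f_n(x_n)\ge f_\lambda(x)$ directly and then let $\lambda\downarrow 0$, whereas the paper, lacking gradients, only gets $f(J_\lambda x)$ on the right and must treat $x\notin\overline{\dom f}$ separately by passing to $g_n=f_{n,\mu}$; that simplification is fine in Hilbert space but does not transfer to the general setting.
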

%
The aim of the present paper is to generalize Theorem \ref{thm:Attouch-MoreauMosco} to Hadamard spaces. 
Note that both $\Gamma$- and Mosco convergences have already been used in this framework. In~\cite{jost98}, 
J.~Jost studied harmonic mappings with metric space targets and as a tool he introduced $\Gamma$-convergence on Hadamard spaces. 
He also defined the Mosco convergence by saying that a sequence of convex lsc functions on a Hadamard space Mosco converges if their Moreau envelopes 
converge pointwise~\cite{jost98}. 
In~\cite{kuwae}, K.~Kuwae and T.~Shioya studied both $\Gamma$- and Mosco convergence in Hadamard space in depth and obtained numerous generalizations. 
They have already given the standard definition of the Mosco convergence used in this paper (relying on the notion of weak convergence)   
and right after their Definition 5.7 in \cite{kuwae} they note ``Jost's definition of Mosco convergence\ldots seems unfitting in view of Mosco's original definition.'' 
By our main result (Theorem \ref{thm:moreau2mosco}) it follows that both definitions are equivalent. 

In \cite[Proposition 5.12]{kuwae}, the authors prove that the Mosco convergence of \emph{nonnegative} 
convex lsc functions on a Hadamard space implies the pointwise convergence of their Moreau envelopes. 
This result was later proved in \cite[Theorem 4.1]{bac15} without the nonnegativity assumption. The inverse implication was left open; 
see \cite[Question 5.2.5]{b2014}. In the present note we answer this question in the positive. As a corollary of our main result 
we obtain that the Mosco convergence of convex closed sets is equivalent to the Frol\'{i}k-Wijsman convergence.

In \cite{bac15,kuwae} the Mosco convergence of functions on Hadamard spaces was studied in connection with \emph{gradient flows.} 
In particular, it was shown in \cite{bac15} that 
the Mosco convergence of convex lsc functions on a Hadamard space implies the pointwise convergence of the associated gradient flow semigroups. 
Interestingly, apart from applications of Hadamard space gradient flows into harmonic mappings theory, see e.g., \cite{jost98},\cite[Section 8]{sturm}, 
there have been also other motivations. 
Most remarkably, gradient flows of a convex function on a Hadamard space appear as an important tool in K\"ahler geometry 
in connection with Donaldson's conjecture on Calabi flows \cite{darvas,streets}. It has also similarly inspired new developments in Riemannian geometry~\cite{gursky}. 
Finally, in \cite{bacak-kovalev}, a gradient flow of a convex continuous function was used to construct a Lipschitz retraction in a Hadamard space. 
For discrete-time gradient flows of convex functions in Hadamard spaces and their applications, see \cite{bac14,b2014}. 

Mosco convergence was proved in \cite{BER2015} in connection with an image processing task for a sequence of functions defined on refined image grids.

If the Hilbert space~$H$ is \emph{separable,} a nice consequence of Theorem \ref{thm:Attouch-MoreauMosco} 
is the existence of a complete separable metric~$\rho$ on~$\Gamma_0(H)$ such that $\rho\left(f,f_n \right)\to0$ if and only if $f_n$ converges to~$f$ 
in the sense of Mosco; see \cite[Theorem 3.36]{Attouch84}. 
Note that this result was shown even for reflexive separable Banach spaces. For its applications into stochastic optimization, the interested reader is referred to~\cite{attouch-wets}. 
In Section~\ref{sec:topo} we prove that there exists a complete metric on~$\Gamma_0(\HH),$ where $\HH$ is a separable Hadamard space now, which corresponds to the Mosco convergence.

\section{Preliminaries on Hadamard spaces} \label{sec:notation}
%
First we collect the preliminaries on Hadamard spaces required for our proof. For the details, we refer to \cite{b2014}. 
A complete metric space $(\HH,d)$ is called a \emph{Hadamard space} if it is geodesic and the following condition holds
\begin{equation}\label{eq:reshet0}
d(x,v)^2 + d(y,w)^2 \le d(x,w)^2 + d(y,v)^2 + 2d(x,y)d(v,w),
\end{equation}
for every $x,y,v,w \in \HH$. 
Recall that a metric space $(X,d)$ is \emph{geodesic} if every two points $x,y \in X$  are connected by a geodesic, 
that is, there exists a curve $\gamma_{\overset{\frown}{x,y}}\colon [0,1] \to X$ such that
\begin{equation*}
  d \bigl(\gamma_{\overset{\frown}{x,y}}(s),\gamma_{\overset{\frown}{x,y}}(t) \bigr) 
  = \lvert s - t\rvert d \bigl(\gamma_{\overset{\frown}{x,y}}(0),\gamma_{\overset{\frown}{x,y}}(1) \bigr), \qquad \text{for every } s,t \in [0,1],
\end{equation*}
and $\gamma_{\overset{\frown}{x,y}}(0)=x$ and $\gamma_{\overset{\frown}{x,y}}(1)=y.$
Inequality~\eqref{eq:reshet0} expresses the fact that Hadamard spaces have nonpositive curvature 
and in particular it implies that geodesics are uniquely determined by their endpoints. It also yields a (formally weaker) inequality
\begin{equation}\label{eq:reshet}
d(x,v)^2 + d(y,w)^2 \le d(x,w)^2 + d(y,v)^2 + d(x,y)^2 + d(v,w)^2,
\end{equation}
valid for every $x,y,v,w \in \HH$. 

The class of Hadamard spaces comprises Hilbert spaces, Hadamard manifolds (that is, complete simply connected Riemannian manifolds of nonpositive sectional curvature), Euclidean buildings and CAT(0) complexes.

The definition of proper and lsc functions carries over from the Hilbert space setting. 
A function $f\fun$ is called \emph{convex} if for every $x,y \in \HH$ the function 
$f \circ \gamma_{\overset{\frown}{x,y}}$ is
convex, i.e., if
\begin{equation} \label{eq:conv}
	f\bigl( \gamma_{\overset{\frown}{x,y}}(t) \bigr)
	\le (1-t) f\bigl( \gamma_{\overset{\frown}{x,y}} (0) \bigr)
	+ t f \bigl(\gamma_{\overset{\frown}{x,y}}(1)\bigr),
\end{equation}
for each $t \in [0,1]$. Let $\Gamma_0(\HH)$  denote the convex cone of proper convex lsc functions on $\HH$.
For $f \in \Gamma_0(\HH)$ and $\lambda > 0$, the \emph{proximal mapping} $J_{\lambda}^f \colon \HH \to \HH$ 
and the \emph{Moreau envelope} $f_\lambda: \HH \to \mR$
are defined as in \eqref{eq:prox} and \eqref{eq:moreau}, respectively,
where the Hilbert space distance has to be replaced by the Hadamard space metric. 
The proximal mapping is nonexpansive. 
The Moreau envelope is convex and continuous. 
Note that in \cite[p. 42]{b2014} it was incorrectly 
claimed that the Moreau envelope is not necessarily lsc. 
We now correct this statement. For a general result in metric spaces, we refer to \cite[Theorem 2.64]{Attouch84}.
Given a metric space $(X,d),$ we denote by $B_R(x)$ the closed ball with radius $R>0$ centered at a~point $x\in X$.
%
\begin{lemma} \label{lem:continuity-M-Y}
 For each $\lambda > 0,$ the Moreau envelope $f_\lambda$ of a function $f \in \Gamma_0(\HH)$ is locally Lipschitz on~$\HH.$
\end{lemma}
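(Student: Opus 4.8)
The plan is to show that $f_\lambda$ is finite everywhere, locally bounded above, and then use convexity together with the standard fact that a convex function bounded above on a ball (in a geodesic space) is Lipschitz on a slightly smaller ball. Let me think through the steps.

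First, finiteness: fix $z \in \dom f$. Then for any $x$, $f_\lambda(x) \le f(z) + \frac{1}{2\lambda} d(x,z)^2 < +\infty$. Lower bound: $f \in \Gamma_0(\HH)$ is convex lsc, so it's bounded below by an affine-type minorant... but wait, in Hadamard spaces we don't have linear functionals. However, $f_\lambda(x) > -\infty$ for a single $x$ is part of what makes the prox well-defined. Actually since $f$ is convex lsc proper on a Hadamard space, it is bounded below on bounded sets — this should follow from the fact that the infimum in the Moreau envelope is attained and... hmm. Let me reconsider: the standard fact is that a convex lsc function on a Hadamard space is bounded below by a function of the form $a - b\,d(\cdot, x_0)$. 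So $f(y) + \frac{1}{2\lambda}d(x,y)^2 \ge a - b\,d(y,x_0) + \frac{1}{2\lambda}d(x,y)^2$, and the quadratic dominates, giving a finite lower bound that's locally bounded in $x$. So $f_\lambda: \HH \to \mR$.

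Next, local boundedness above. On $B_R(z)$ with $z \in \dom f$: $f_\lambda(x) \le f(z) + \frac{1}{2\lambda}R^2$. For local boundedness below on $B_R(z)$: from the minorant bound above, on $B_R(z)$ we get $f_\lambda(x) \ge \inf_y \{a - b\,d(y,x_0) + \frac{1}{2\lambda}d(x,y)^2\}$, and since $d(x,x_0) \le R + d(z,x_0)$, this infimum is bounded below uniformly on $B_R(z)$. So $f_\lambda$ is bounded on every ball.

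Finally, convexity (stated in the excerpt: "The Moreau envelope is convex") plus boundedness gives local Lipschitz continuity. The key lemma: if $g$ is convex on a Hadamard space and bounded on $B_R(x_0)$, say $|g| \le M$ there, then $g$ is Lipschitz with constant $\frac{2M}{R - r}$ on $B_r(x_0)$ for $r < R$. The proof in the geodesic setting mirrors the classical one: for $x, y \in B_r(x_0)$, extend the geodesic from $x$ to $y$ past $y$ by the right amount to a point $w \in B_R(x_0)$, write $y$ as a convex combination of $x$ and $w$, apply convexity, and estimate. I'd cite this as a known fact about convex functions on geodesic spaces, or sketch it in one line.

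**The main obstacle** I anticipate is making sure the lower bound / minorant argument is airtight in the Hadamard setting without inadvertently invoking Hilbert-space structure — specifically establishing that a proper convex lsc function on a Hadamard space has a lower bound of the form $a - b\,d(\cdot,x_0)$, which is where properness and lower semicontinuity really get used, and tracking that this bound is locally uniform in the base point $x$. Alternatively, one can bypass the affine minorant by directly arguing that if $f_\lambda(x)$ were $-\infty$ or unbounded below near a point, the infimum defining it would fail to be attained, contradicting the well-posedness of the proximal mapping; but spelling this out cleanly still requires the same coercivity estimate. Everything after that — the convexity-plus-boundedness-implies-Lipschitz step — is routine and geometry-independent beyond needing geodesics, which Hadamard spaces provide.
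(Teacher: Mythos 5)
Your preliminary steps (finiteness of $f_\lambda$, the minorant $f \ge a - b\,d(\cdot,x_0)$ for proper convex lsc $f$, local boundedness of $f_\lambda$ above and below) are correct, but the final step, which you describe as routine and geometry-independent, is exactly where the argument breaks. The implication ``convex plus locally bounded implies locally Lipschitz'' is \emph{false} in a general Hadamard space: the classical proof requires extending the geodesic from $x$ through $y$ strictly beyond $y$ inside the space, i.e.\ a geodesic extension property that Hadamard spaces need not enjoy (closed convex subsets of Hilbert spaces, metric trees with leaves, CAT(0) complexes with boundary are all Hadamard spaces without it). Concretely, $\HH=[0,1]$ with the usual metric is a Hadamard space, and $g(x)=-\sqrt{x}$ is convex, lsc and bounded on all of $\HH$, yet it is not locally Lipschitz at $0$. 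Since the lemma is stated for arbitrary Hadamard spaces, your route cannot close; convexity and boundedness of $f_\lambda$ alone do not suffice, and no shrinking of the ball repairs this.

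The lemma is nevertheless true, but for reasons specific to the structure of the envelope, and this is how the paper argues: test the infimum defining $f_\lambda(y)$ with the point $J_\lambda x$, which is the exact minimizer for $x$. This gives $f_\lambda(y)\le f_\lambda(x)+\tfrac1{2\lambda}\bigl(d(y,J_\lambda x)^2-d(x,J_\lambda x)^2\bigr)$, and factoring the difference of squares and using the triangle inequality bounds it by $f_\lambda(x)+\tfrac1{2\lambda}d(x,y)\bigl(2d(x,J_\lambda x)+d(x,y)\bigr)$. Since $J_\lambda$ is nonexpansive, $d(z,J_\lambda z)$ is bounded by some $C$ on $B_R(x_0)$, so interchanging $x$ and $y$ yields the Lipschitz constant $(C+R)/\lambda$ on $B_R(x_0)$ --- with no use of convexity of $f_\lambda$ and no geodesic extension. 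If you want to salvage your outline, you must replace the ``convexity plus boundedness'' lemma by such a direct two-point estimate (or restrict to spaces with the geodesic extension property, which would prove a strictly weaker statement than the one asked).
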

\begin{proof}
Let $x_0\in\HH$ and $R>0.$ Choose $C>0$ such that $d\left(z,J_\lambda z\right)<C$ for each $z\in B_R\left(x_0\right).$ Then for every $x,y\in B_R\left(x_0\right)$ we have
\begin{align*}
 f_\lambda(y) & \le f\left( J_\lambda x\right)+\trez{2\lambda}d\left(y,J_\lambda x\right)^2 \\ &\le f_\lambda(x) -\trez{2\lambda}d\left(x,J_\lambda x\right)^2+\trez{2\lambda}d\left(y,J_\lambda x\right)^2 \\ 
&=  f_{\lambda}(x)
+\trez{2\lambda} \bigl( \di(y, J_{\lambda}x) - \di(x, J_{\lambda}x)\bigr)
                            \bigl(\di(y, J_{\lambda}x) + \di(x, J_{\lambda}x)\bigr)\\
&\le f_\lambda(x) +\trez{2\lambda}d(x,y)\left(2d\left(x,J_\lambda x\right)+d(x,y)\right) \\ 
& \le f_\lambda(x) +\trez{2\lambda}d(x,y)\left(2C+2R\right),
\end{align*}
where we used the triangle inequality for the third inequality.
Interchanging $x$ and $y$ yields
\begin{equation*}
 \left| f_\lambda(x)-f_\lambda(y)\right| \le \frac{C+R}{\lambda}d(x,y). \qedhere
\end{equation*}
\end{proof}

The Moreau envelope of a function $f \in \Gamma_0(\HH)$ possesses the semigroup property
\begin{equation*}
(f_{\lambda})_\mu  = f_{\lambda + \mu} , \quad \lambda, \mu >0.
\end{equation*}
Furthermore, for each $x \in \HH$, we have
\begin{equation}\label{conv_lambda}
\lim_{\lambda \to +0} f_\lambda (x) = f(x),
\end{equation}
see \cite[Theorem 2.46]{Attouch84}. By
\begin{equation*}
 \lim_{\mu \to +0} f_{\lambda + \mu} (x) = \lim_{\mu \to +0} \left(f_\lambda \right)_\mu (x) = f_\lambda (x) 
\end{equation*}
the mapping $\lambda \mapsto f_\lambda(x)$ is right-continuous.
In fact it is even locally Lipschitz~\cite[Proposition 2.2.27]{b2014}.

We will also need the following two auxiliary lemmas.
\begin{lemma}
 Let $(\HH,d)$ be a Hadamard space and $f\in\Gamma_0(\HH).$ Then for every $x, y \in \HH$ and $\lambda > 0$, we have
 \begin{equation} \label{eq:ineq}
  f\left(J_\lambda  x\right) + \tfrac1{2\lambda} d\left(x,J_\lambda x\right)^2 
  + \tfrac1{2\lambda} d\left(J_\lambda  x,y\right)^2  \leq f(y)+\tfrac1{2\lambda} d(x,y)^2.
\end{equation}
\end{lemma}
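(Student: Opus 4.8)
The plan is to read \eqref{eq:ineq} as a strong-convexity estimate for the functional $g_x\colon z\mapsto f(z)+\tfrac1{2\lambda}d(x,z)^2$, whose unique minimizer is $J_\lambda x$ by the very definition \eqref{eq:prox} of the proximal mapping. The one geometric ingredient needed is that in a Hadamard space the squared distance to a fixed point is \emph{strongly} convex along geodesics: for every $p\in\HH$ and every geodesic $\gamma_{\overset{\frown}{z,w}}$,
\[
 d\bigl(p,\gamma_{\overset{\frown}{z,w}}(t)\bigr)^2 \le (1-t)\,d(p,z)^2 + t\,d(p,w)^2 - t(1-t)\,d(z,w)^2, \qquad t\in[0,1].
\]
This is the classical CAT(0) comparison inequality and is quickly derivable from \eqref{eq:reshet0}. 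Combined with the convexity \eqref{eq:conv} of $f$, it shows that $g_x$ is $\tfrac1\lambda$-strongly convex along geodesics.

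Concretely, I would fix $x,y\in\HH$ and $\lambda>0$, write $\bar z:=J_\lambda x$, and let $\gamma:=\gamma_{\overset{\frown}{\bar z,y}}$. For $t\in(0,1]$, adding $\tfrac1{2\lambda}$ times the displayed inequality (with $p=x$, $z=\bar z$, $w=y$) to \eqref{eq:conv} gives
\[
 g_x(\gamma(t)) \le (1-t)\,g_x(\bar z) + t\,g_x(y) - \tfrac1{2\lambda}\,t(1-t)\,d(\bar z,y)^2 .
\]
Since $\bar z$ minimizes $g_x$, the left-hand side is at least $g_x(\bar z)$; subtracting $(1-t)\,g_x(\bar z)$ from both sides and dividing by $t>0$ leaves
\[
 g_x(\bar z) + \tfrac1{2\lambda}(1-t)\,d(\bar z,y)^2 \le g_x(y).
\]
Letting $t\to 0^+$ and expanding $g_x(\bar z)=f(J_\lambda x)+\tfrac1{2\lambda}d(x,J_\lambda x)^2$ and $g_x(y)=f(y)+\tfrac1{2\lambda}d(x,y)^2$ yields exactly \eqref{eq:ineq}.

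I do not anticipate a genuine obstacle. The only step that needs care is the strong convexity of the squared distance, but this is standard in Hadamard geometry (see e.g.\ \cite{b2014}) and is essentially encoded in the defining inequality \eqref{eq:reshet0}; the rest is the routine ``the value at the minimizer is the smallest'' argument followed by a one-sided limit (which one could also replace by differentiating $t\mapsto g_x(\gamma(t))$ at $0$).
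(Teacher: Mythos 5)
Your proof is correct. The paper does not spell out an argument for this lemma---it simply cites \cite[Lemma 2.2.23]{b2014}---and your derivation (the $\tfrac1\lambda$-strong convexity of $z\mapsto f(z)+\tfrac1{2\lambda}d(x,z)^2$ via the CAT(0) comparison inequality for the squared distance, combined with the minimality of $J_\lambda x$ and the limit $t\to 0^+$) is precisely the standard proof given in that reference.
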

\begin{proof}
The proof can be found in \cite[Lemma 2.2.23]{b2014}.
\end{proof}

\begin{lemma}		\label{thm:moreau-equiMinorization}
Let $\HH$ be a Hadamard space and $\{f_n\}_{n}\subset\Gamma_0(\HH).$ Suppose that for some $\lambda > 0$ and $x_0 \in \HH$ 
the Moreau envelopes fulfill $f_{n,\lambda} (x_0) \to f_0 \in\mR$ as $n\to\infty.$ Then there exists $r > 0$ such that
\begin{equation} 
  f_n (x) \geq -r \left( d(x,x_0)^2 + 1 \right),   \label{enum:moreau-equiMinorization}
\end{equation}
for each $x \in \HH.$
\end{lemma}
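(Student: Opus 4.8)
The plan is to read the bound off directly from the defining minimization of the Moreau envelope; no convexity or curvature input is needed beyond the fact that each $f_{n,\lambda}$ is a well-defined real-valued function. The crucial observation is that the assumed convergence $f_{n,\lambda}(x_0)\to f_0$ yields a lower bound on the envelopes at the \emph{single} point $x_0$ that is \emph{uniform} in $n$, and that the quadratic penalty $\trez{2\lambda}\,d(x_0,\cdot)^2$ already present in \eqref{eq:moreau} then transports this into a lower bound on each $f_n$ which grows quadratically with the distance to $x_0$.

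Concretely, I would argue in three short steps. First, directly from \eqref{eq:moreau}, for every $n\in\mN$ and every $y\in\HH$ one has
\begin{equation*}
 f_n(y)+\trez{2\lambda}\,d(x_0,y)^2\ \ge\ f_{n,\lambda}(x_0),
 \qquad\text{i.e.}\qquad
 f_n(y)\ \ge\ f_{n,\lambda}(x_0)-\trez{2\lambda}\,d(x_0,y)^2,
\end{equation*}
the inequality being vacuously true when $y\notin\dom f_n$. Second, since $f_{n,\lambda}(x_0)\to f_0\in\mR$, the scalar sequence $\{f_{n,\lambda}(x_0)\}_n$ is bounded, so $c:=\inf_n f_{n,\lambda}(x_0)>-\infty$; inserting this into the previous display gives $f_n(x)\ge c-\trez{2\lambda}\,d(x_0,x)^2$ for every $n$ and every $x\in\HH$. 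Third, I would set $r:=\max\bigl\{\,|c|,\ \trez{2\lambda}\,\bigr\}$, which is strictly positive, and then $c\ge -r$ together with $\trez{2\lambda}\le r$ yields
\begin{equation*}
 f_n(x)\ \ge\ -r-r\,d(x_0,x)^2\ =\ -r\bigl(d(x,x_0)^2+1\bigr),
\end{equation*}
which is \eqref{enum:moreau-equiMinorization}.

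I do not expect a genuine obstacle. The two points deserving a moment's care are: that the constant $r$ must be independent of $n$, which is precisely what boundedness of the convergent sequence $\{f_{n,\lambda}(x_0)\}_n$ secures; and that the infimum in \eqref{eq:moreau} ranges over all of $\HH$, so that the competitor $y=x$ is admissible and the penalty term $\trez{2\lambda}\,d(x_0,x)^2$ is genuinely available at each point $x$.
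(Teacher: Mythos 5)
Your proposal is correct and follows essentially the same route as the paper: bound $f_n(x)\ge f_{n,\lambda}(x_0)-\tfrac{1}{2\lambda}d(x,x_0)^2$ directly from the definition of the envelope and use the convergence at $x_0$ to make the constant uniform in $n$. If anything, taking $c:=\inf_n f_{n,\lambda}(x_0)$ is a slight refinement, since it covers all indices $n$ at once, whereas the paper's argument as written only treats $n\ge n_0$ and leaves the finitely many remaining indices implicit.
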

\begin{proof}
By assumptions, there is some $n_0$ such that 
$\left|f_{n,\lambda}(x_0) - f_0\right| \leq 1$ for each $n \geq n_0$. 
Thereby, from
\begin{equation*}
  f_{n,\lambda}(x_0) = \inf_x \bigl\{ f_n(x) + \tfrac{1}{2\lambda} d(x,x_0)^2 \bigr\}
\end{equation*}
we obtain that
\begin{equation}  \label{eq:moreau-minor}
  f_n(x) \geq f_{n,\lambda}(x_0) - \tfrac{1}{2\lambda} d(x,x_0)^2 \geq f_0 -1- \tfrac{1}{2\lambda} d(x,x_0)^2,
\end{equation}
for every $x \in \HH$ and $n\geq n_0.$ This yields the desired assertion.
\end{proof} 

The definition of Mosco convergence in Hadamard spaces requires a notion of weak convergence.
For a bounded sequence $\{x_n\}_n$ of points $x_n \in \HH$, the function 
$\omega \colon \HH \to [0, +\infty)$ defined by
\begin{equation}
  \omega(x;\, \{x_n\}_n) := \limsup_{n\to\infty} d(x, x_n)^2
\end{equation}
has a unique minimizer, which is called the \emph{asymptotic center}
of $\{x_n\}_n$, see \cite[p.~58]{b2014}.
A sequence $\{x_n\}_n$ \emph{weakly converges} to a point $x \in \HH$ if it is bounded 
and $x$ is the asymptotic center of each subsequence of $\{x_n\}_n$, see  \cite[p.~103]{b2014}. 
We write $x_n \weakto x$. Note that $x_n \to x$ implies $x_n \weakto x$.
Then the definition of Mosco convergence given in the previous section carries over to functions defined on Hadamard spaces.
%

One of our results (Theorem \ref{theo:final}) uses $\Gamma$-convergence in an intermediate step.
 In the presence of~\eqref{enum:moreau-equiMinorization},
$\Gamma$-convergence has a characterization as equality of upper and lower $\Gamma$-limits,
\begin{equation*}
	 \sup_{\lambda > 0} \liminf_{n\to\infty} f_{n,\lambda} (x)	
	= \sup_{\lambda > 0} \limsup_{n\to\infty} f_{n,\lambda} (x),
\end{equation*}
for all $x \in \HH$, see~\cite[Theorem~2.65]{Attouch84}. 
We will need only the following consequence thereof \cite[Corollary~2.67, (2.166)]{Attouch84}.
 
\begin{theorem} \label{thm:metricgamma}
	Let $(X,d)$ be a metric space and $\{f_n\}_n$ a sequence of functions $f_n\colon X \to (-\infty,+\infty].$ 
	Assume there exist $x_0 \in X$ and $r > 0$ such that~\eqref{enum:moreau-equiMinorization} 
	is satisfied for each $x\in X$ and $n\in\mN.$ If for every $\lambda>0$ and $x\in X$ 
	there exists a number $f(\lambda,x)\in\mR$ such that $f_{n,\lambda}(x)\to f(\lambda,x),$ then $f_n$ is $\Gamma$-convergent and
\begin{equation*}
 f_n \gammaTo \sup_{\lambda>0} f(\lambda,\cdot) \quad {\rm as} \;  n\to\infty.
\end{equation*}
\end{theorem}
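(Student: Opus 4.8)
The plan is to verify directly that $\{f_n\}_n$ $\Gamma$-converges to $g:=\sup_{\lambda>0}f(\lambda,\cdot)$ by checking, at each point $x\in X$, the two defining conditions of $\Gamma$-convergence: the liminf inequality $g(x)\le\liminf_{n\to\infty}f_n(x_n)$ for every sequence $x_n\to x$, and the existence of a recovery sequence $y_n\to x$ with $f_n(y_n)\to g(x)$. First I would record that, since every Moreau envelope $\lambda\mapsto f_{n,\lambda}(x)$ is nonincreasing, so is the pointwise limit $\lambda\mapsto f(\lambda,x)$, whence $g(x)=\lim_{\lambda\to+0}f(\lambda,x)$; this monotonicity is used throughout.

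For the liminf inequality, fix $\lambda>0$ and $x_n\to x$. Choosing $y=x_n$ in the infimum defining $f_{n,\lambda}(x)$ gives $f_{n,\lambda}(x)\le f_n(x_n)+\trez{2\lambda}d(x,x_n)^2$; letting $n\to\infty$ and using $d(x,x_n)\to0$ together with $f_{n,\lambda}(x)\to f(\lambda,x)$ yields $f(\lambda,x)\le\liminf_{n\to\infty}f_n(x_n)$, and taking the supremum over $\lambda>0$ finishes this half. Applying the same reasoning to the constant sequence $x_n\equiv x$ shows in particular $\liminf_{n\to\infty}f_n(x)\ge g(x)$, so when $g(x)=+\infty$ the constant sequence $y_n\equiv x$ is already a recovery sequence.

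The substantive part is the recovery sequence when $g(x)<+\infty$. Here I would restrict to $\lambda\in(0,\tfrac1{4r})$: using~\eqref{enum:moreau-equiMinorization} and the elementary inequality $(a+b)^2\le2a^2+2b^2$ one checks that $y\mapsto f_n(y)+\trez{2\lambda}d(x,y)^2$ is bounded below on $X$, so $f_{n,\lambda}(x)$ is finite for all large $n$ and a near-minimizer $z_{n,\lambda}\in X$ exists with $f_n(z_{n,\lambda})+\trez{2\lambda}d(x,z_{n,\lambda})^2\le f_{n,\lambda}(x)+\tfrac1n$. Inserting the minorization $f_n(z_{n,\lambda})\ge-r\bigl(d(z_{n,\lambda},x_0)^2+1\bigr)$ and, once more, $d(z_{n,\lambda},x_0)^2\le2d(z_{n,\lambda},x)^2+2d(x,x_0)^2$, the terms involving $d(z_{n,\lambda},x)^2$ collect on the left with positive coefficient $\tfrac1{2\lambda}-2r$, which produces a bound of the form $d(x,z_{n,\lambda})^2\le\trez{1-4r\lambda}\,2\lambda\bigl(f_{n,\lambda}(x)+\tfrac1n+2rd(x,x_0)^2+r\bigr)$. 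Passing to $\limsup_n$ and using $f_{n,\lambda}(x)\to f(\lambda,x)\le g(x)$ gives $\limsup_{n}d(x,z_{n,\lambda})^2\le\delta(\lambda)$, where $\delta(\lambda)\ge0$ and $\delta(\lambda)\to0$ as $\lambda\to+0$, while discarding the nonnegative distance term gives $f_n(z_{n,\lambda})\le f_{n,\lambda}(x)+\tfrac1n$, hence $\limsup_n f_n(z_{n,\lambda})\le f(\lambda,x)\le g(x)$. Choosing $\lambda_k\to+0$ in $(0,\tfrac1{4r})$ and extracting, for each $k$, a threshold $N_k$ (with $N_1<N_2<\cdots$) beyond which $d(x,z_{n,\lambda_k})^2\le\delta(\lambda_k)+\tfrac1k$ and $f_n(z_{n,\lambda_k})\le g(x)+\tfrac1k$, the diagonal sequence $y_n:=z_{n,\lambda_k}$ for $N_k\le n<N_{k+1}$ satisfies $y_n\to x$ and $\limsup_{n}f_n(y_n)\le g(x)$; combined with the liminf inequality applied to $\{y_n\}_n$ this forces $f_n(y_n)\to g(x)$, which is the desired recovery sequence.

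I expect the recovery step to be the main obstacle, the crux being the uniform-in-$n$ control of $d(x,z_{n,\lambda})$ extracted from the equi-minorization hypothesis~\eqref{enum:moreau-equiMinorization}, followed by the diagonalization over $\lambda_k$. As a cross-check, once the liminf inequality is available the whole statement also drops out of the characterization of $\Gamma$-convergence recalled just before the theorem in terms of $\sup_{\lambda>0}\liminf_n f_{n,\lambda}$ and $\sup_{\lambda>0}\limsup_n f_{n,\lambda}$ (see \cite[Corollary~2.67]{Attouch84}): pointwise convergence of $\{f_{n,\lambda}\}_n$ makes these two quantities coincide and equal $g$.
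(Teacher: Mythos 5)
Your argument is correct, and it is genuinely different in character from what the paper does: the paper offers no proof of this theorem at all, quoting it as a consequence of Attouch's general theory of epi-convergence via Moreau--Yosida approximation (the characterization through upper and lower $\Gamma$-limits, \cite[Theorem~2.65, Corollary~2.67]{Attouch84}), which is exactly the ``cross-check'' you mention at the end. You instead verify the two defining conditions of $\Gamma$-convergence from scratch: the monotonicity of $\lambda\mapsto f_{n,\lambda}(x)$ identifies $\sup_{\lambda>0}f(\lambda,\cdot)$ with $\lim_{\lambda\to+0}f(\lambda,\cdot)$; the liminf inequality falls out of the trivial bound $f_{n,\lambda}(x)\le f_n(x_n)+\tfrac1{2\lambda}d(x,x_n)^2$; and the recovery sequence is assembled from $\varepsilon$-minimizers $z_{n,\lambda}$, where the equi-minorization~\eqref{enum:moreau-equiMinorization} together with $d(z,x_0)^2\le 2d(z,x)^2+2d(x,x_0)^2$ and the restriction $\lambda<\tfrac1{4r}$ gives the uniform-in-$n$ estimate $d(x,z_{n,\lambda})^2\le\tfrac{2\lambda}{1-4r\lambda}\bigl(f_{n,\lambda}(x)+\tfrac1n+2rd(x,x_0)^2+r\bigr)$, after which the diagonalization over $\lambda_k\downarrow 0$ is standard. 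I checked the constants and the limit passages (including $\delta(\lambda)\to0$, which uses $f(\lambda,x)\le g(x)<\infty$ and the boundedness of the bracket) and they are sound; the degenerate cases ($g(x)=+\infty$, and $f_{n,\lambda}(x)=+\infty$ for small $n$) are handled correctly. Two small virtues of your route worth noting: it works with the infimum rather than an attained minimum, which is the right reading of the envelope in a bare metric space where no existence of proximal points is available, and it is self-contained, whereas the paper's route presupposes the machinery of \cite{Attouch84}; the price is that you reprove in this special case what that machinery delivers in general (e.g.\ the version with a sequence $\lambda_k\downarrow0$ mentioned after the theorem, which your proof in fact also yields verbatim since only small $\lambda$ along a null sequence is ever used).
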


%
 The theorem still holds if we consider a decreasing sequence $\{\lambda_k\}_{k \in \mN}$ which converges to 0
instead of all $\lambda > 0$.

\section{Characterization of Mosco Convergence by Moreau Envelopes} \label{sec:result}
%
The implication i) $\implies$ ii) in Theorem \ref{thm:Attouch-MoreauMosco} 
has been generalized to Hadamard spaces in \cite[Theorem 4.1]{bac15}:
\begin{theorem} \label{thm:mosco2moreau}
Let $\HH$ be a Hadamard space
and  $\{ f_n \}_n$ a sequence of functions $f_n \in \Gamma_0(\HH)$
which converges to a proper function
$f\fun$ in the sense of Mosco. Then we have 
\begin{equation*} 
f_{n,\lambda} (x) \to f_\lambda(x),\quad\text{and}\quad J_\lambda^n x \to J_\lambda x, \qquad \text{as } n \to \infty,
\end{equation*}
for every $\lambda >0$ and $x \in \HH.$
\end{theorem}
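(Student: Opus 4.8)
The plan is to establish the two convergences in sequence: first the pointwise convergence of the Moreau envelopes, and then the convergence of the resolvents as a consequence of it via the fundamental inequality~\eqref{eq:ineq}. Fix $x\in\HH$ and $\lambda>0$, and write $y_n:=J_\lambda^n x$ for the unique minimizers defining $f_{n,\lambda}(x)$. As a preliminary step I would record that the Mosco limit $f$ lies in $\Gamma_0(\HH)$, so that $f_\lambda(x)=f(J_\lambda x)+\tfrac{1}{2\lambda}d(x,J_\lambda x)^2$ is a well-defined real number with unique minimizer $y^\ast:=J_\lambda x$: convexity of $f$ follows by transporting the convexity of each $f_n$ along the geodesic joining recovery sequences (using that geodesics depend continuously on their endpoints, the recovery property ii), and the $\liminf$-bound i)), while lower semicontinuity is the standard fact that a Mosco limit is lsc.

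\emph{Upper bound.} For $\limsup_n f_{n,\lambda}(x)\le f_\lambda(x)$ I would use only condition ii): given any $y\in\HH$, pick a recovery sequence $y_n\to y$ with $f_n(y_n)\to f(y)$, so that $f_{n,\lambda}(x)\le f_n(y_n)+\tfrac{1}{2\lambda}d(x,y_n)^2\to f(y)+\tfrac{1}{2\lambda}d(x,y)^2$; taking the infimum over $y$ gives the bound, and in particular $\{f_{n,\lambda}(x)\}_n$ is bounded above.

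\emph{Boundedness of the resolvents.} This is the step I expect to be the main obstacle, and I would argue it by contradiction. If $\{y_n\}$ were unbounded, then along a subsequence $d(x,y_n)\to\infty$, and since $f_{n,\lambda}(x)=f_n(y_n)+\tfrac{1}{2\lambda}d(x,y_n)^2$ is bounded above, necessarily $f_n(y_n)\to-\infty$. Choosing $z\in\dom f$ with a recovery sequence $z_n\to z$, $f_n(z_n)\to f(z)$, I would travel along the geodesic from $z_n$ to $y_n$ to a point $p_n$ at unit distance from $z_n$; convexity of $f_n$ then forces $f_n(p_n)\to-\infty$ while $\{p_n\}$ remains bounded. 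Passing to a weakly convergent subsequence $p_{n_j}\weakto p$ (bounded sequences in a Hadamard space are weakly sequentially compact) and applying condition i) gives $f(p)\le\liminf_j f_{n_j}(p_{n_j})=-\infty$, contradicting properness of $f$. Hence $\{y_n\}$ is bounded. Now for the lower bound, pass to a subsequence realizing $\liminf_n f_{n,\lambda}(x)$ and extract a further weakly convergent subsequence $y_{n_j}\weakto\bar y$; condition i) yields $f(\bar y)\le\liminf_j f_{n_j}(y_{n_j})$, while weak lower semicontinuity of the convex continuous map $d(x,\cdot)^2$ yields $d(x,\bar y)^2\le\liminf_j d(x,y_{n_j})^2$. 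Adding these (using superadditivity of $\liminf$) gives $f_\lambda(x)\le f(\bar y)+\tfrac{1}{2\lambda}d(x,\bar y)^2\le\liminf_n f_{n,\lambda}(x)$, which together with the upper bound proves $f_{n,\lambda}(x)\to f_\lambda(x)$.

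\emph{Convergence of the resolvents.} Finally I would apply inequality~\eqref{eq:ineq} for $f_n$ with the test point $w_n$, where $w_n\to y^\ast$ is a recovery sequence for $y^\ast=J_\lambda x$, so that $f_n(w_n)\to f(y^\ast)$. This yields $\tfrac{1}{2\lambda}d(y_n,w_n)^2\le f_n(w_n)+\tfrac{1}{2\lambda}d(x,w_n)^2-f_{n,\lambda}(x)$, whose right-hand side converges to $f(y^\ast)+\tfrac{1}{2\lambda}d(x,y^\ast)^2-f_\lambda(x)=0$ by the envelope convergence just established. Hence $d(y_n,w_n)\to0$, and since $w_n\to y^\ast$ the triangle inequality gives $J_\lambda^n x=y_n\to y^\ast=J_\lambda x$ strongly, completing the proof.
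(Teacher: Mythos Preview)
The paper does not supply its own proof of this theorem; it merely records that the result was established in \cite[Theorem~4.1]{bac15} and states it for later use. So there is no argument in the paper against which to compare yours directly.

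That said, your proposal is correct and follows what is essentially the standard route. The upper bound via recovery sequences, the lower bound via weak sequential compactness of bounded sequences in $\HH$ together with weak lower semicontinuity of $d(x,\cdot)^2$ and condition~i), and the final squeeze on $d(J_\lambda^n x,w_n)$ using~\eqref{eq:ineq} are exactly how this is done in the literature. Your boundedness-by-contradiction step is fine: the key estimate $t_n f_n(y_n)\to-\infty$ follows because $f_n(y_n)\le C-\tfrac{1}{2\lambda}d(x,y_n)^2$ and $t_n d(x,y_n)^2$ is comparable to $d(x,y_n)\to\infty$, so the convexity bound at $p_n$ indeed forces $f_n(p_n)\to-\infty$, contradicting condition~i). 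One small remark: the preliminary verification that $f\in\Gamma_0(\HH)$ is not strictly needed for the envelope convergence itself (the upper and lower bounds go through with $f$ merely proper and satisfying the Mosco inequalities), but it is of course needed to make sense of $J_\lambda x$ in the resolvent part, and your sketch of it via continuity of geodesics in their endpoints is the right idea.
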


Note that like in Hilbert spaces $\Gamma$-convergence on Hadamard spaces (and thus Mosco convergence) preserves convexity 
and the above limit function is lsc.
 
Our main result is the inverse implication.
\begin{theorem}  \label{thm:moreau2mosco}
Let $\HH$ be a Hadamard space and $\{ f_n \}_{n}\subset \Gamma_0(\HH).$ Assume that for each $\lambda >0$ the sequence of Moreau envelopes 
$\{ f_{n,\lambda} \}_n$ converges pointwise to the Moreau envelope $f_\lambda$ of a function $f \in \Gamma_0(\HH).$ Then $f_n \moscoTo f$ as $n \to \infty.$
\end{theorem}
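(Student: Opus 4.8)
\emph{Plan.} The plan is to verify the two defining conditions of Mosco convergence directly, using $\Gamma$-convergence of $\{f_n\}_n$ and the strong convergence $J_\lambda^n x\to J_\lambda x$ of the proximal points as the two intermediate facts; the hard part will be upgrading the $\Gamma$-convergence $\liminf$-inequality, which only concerns strongly convergent sequences, to the weakly convergent sequences required for Mosco convergence.

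\emph{Step 1 (equi-minorization and $\Gamma$-convergence).} First I would note that the hypothesis gives $f_{n,1}(x_0)\to f_1(x_0)\in\mR$ for any fixed $x_0\in\HH$, so Lemma~\ref{thm:moreau-equiMinorization} supplies $r>0$ with $f_n(x)\ge-r\bigl(d(x,x_0)^2+1\bigr)$ for all $x\in\HH$ and all large $n$; the remaining finitely many indices are handled individually via the quadratic lower bound $f_n(x)\ge f_{n,1}(x_0)-\tfrac{1}{2}d(x,x_0)^2$, so that~\eqref{enum:moreau-equiMinorization} holds for every $n\in\mN$. Feeding this and the hypothesis $f_{n,\lambda}(x)\to f_\lambda(x)$ into Theorem~\ref{thm:metricgamma}, and using that $\lambda\mapsto f_\lambda(x)$ is nonincreasing with $\sup_{\lambda>0}f_\lambda(x)=\lim_{\lambda\to+0}f_\lambda(x)=f(x)$ by~\eqref{conv_lambda}, I obtain $f_n\gammaTo f$. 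The recovery condition in $\Gamma$-convergence is verbatim condition~(ii) of Mosco convergence, so~(ii) is already done (including the case $f(x)=+\infty$, once~(i) is available).

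\emph{Step 2 (convergence of proximal points).} Next I would show $J_\lambda^n x\to J_\lambda x$ for every $\lambda>0$ and $x\in\HH$. Applying~\eqref{eq:ineq} to $f_n$ with base point $x$ yields, for all $y\in\HH$,
\begin{equation*}
 f_{n,\lambda}(x)+\tfrac{1}{2\lambda} d\bigl(J_\lambda^n x,y\bigr)^2\le f_n(y)+\tfrac{1}{2\lambda} d(x,y)^2 .
\end{equation*}
For fixed $p\in\dom f$, plugging in a recovery sequence $y_n\to p$ with $f_n(y_n)\to f(p)$ (from Step~1) first shows $\{J_\lambda^n x\}_n$ is bounded, and then, passing to the limit, $\limsup_n d(J_\lambda^n x,p)^2\le 2\lambda\bigl(f(p)-f_\lambda(x)\bigr)+d(x,p)^2$ for every $p\in\dom f$. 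Minimizing the right-hand side over $p$ — the minimum equals $0$ and is attained at $p=J_\lambda x$ — gives $\limsup_n d(J_\lambda^n x,J_\lambda x)^2\le 0$.

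\emph{Step 3 (the weak $\liminf$ inequality, and the main obstacle).} It remains to prove condition~(i). Let $x_n\weakto x$; we may assume $L:=\liminf_n f_n(x_n)<+\infty$ and, after passing to a subsequence, that $f_n(x_n)\to L$. Applying~\eqref{eq:ineq} to $f_n$ with base point $x$ and $y=x_n$ gives
\begin{equation*}
 f_n(x_n)\ge f_{n,\lambda}(x)+\tfrac{1}{2\lambda}\Bigl(d\bigl(J_\lambda^n x,x_n\bigr)^2-d(x,x_n)^2\Bigr).
\end{equation*}
Since $J_\lambda^n x\to J_\lambda x$ by Step~2 and $\{x_n\}_n$ is bounded, $d(J_\lambda^n x,x_n)^2=d(J_\lambda x,x_n)^2+o(1)$, hence, letting $n\to\infty$ and using $f_{n,\lambda}(x)\to f_\lambda(x)$,
\begin{equation*}
 L\ge f_\lambda(x)+\tfrac{1}{2\lambda}\liminf_{n\to\infty}\Bigl(d\bigl(J_\lambda x,x_n\bigr)^2-d(x,x_n)^2\Bigr)\ge f_\lambda(x).
\end{equation*}
The last inequality is the Hadamard-space substitute for the Hilbert identity $\|x_n-y\|^2-\|x_n-x\|^2\to\|x-y\|^2$, namely $\liminf_n\bigl(d(y,x_n)^2-d(x,x_n)^2\bigr)\ge d(x,y)^2$ whenever $x_n\weakto x$; this I expect to be the technical crux and would isolate as a short lemma, proved by passing to a subsequence along which $d(x_n,x)^2\to a$ and $d(x_n,y)^2\to b$, applying the strong convexity of $z\mapsto d(x_n,z)^2$ along the geodesic from $x$ to $y$, and using that $x$ is the asymptotic center of every subsequence of $\{x_n\}_n$ to get $a\le(1-t)a+tb-t(1-t)d(x,y)^2$ for $t\in(0,1)$, whence $b-a\ge d(x,y)^2$ as $t\to0$ (cf.~\cite{b2014}). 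Since $L\ge f_\lambda(x)$ holds for every $\lambda>0$, taking the supremum over $\lambda$ and invoking $\sup_{\lambda>0}f_\lambda(x)=f(x)$ once more gives $\liminf_n f_n(x_n)\ge f(x)$, i.e.~condition~(i). Together with Step~1 this yields $f_n\moscoTo f$.
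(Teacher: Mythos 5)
Your proposal is correct, and it takes a partly different route from the paper's proof, though both rest on the same three ingredients: the equi-minorization of Lemma~\ref{thm:moreau-equiMinorization}, the proximal inequality~\eqref{eq:ineq}, and the asymptotic-center characterization of weak limits. For the recovery condition, the paper argues directly via the diagonalization lemma of Attouch, producing $y_n=J_{\lambda_n}^n x$ with $\lambda_n\to0$, whereas you outsource this to Theorem~\ref{thm:metricgamma} (which the paper states and uses only in Section~\ref{sec:topo}) to get $f_n\gammaTo f$ at once; this is legitimate and shortens the limsup step, at the cost of quoting a heavier result from Attouch's book. Your Step~2 is essentially the paper's Step~2 (recovery sequence at $J_\lambda x$ plus~\eqref{eq:ineq}). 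The genuine improvement is in the weak liminf step: applying~\eqref{eq:ineq} once with $y=x_n$ keeps the full envelope $f_{n,\lambda}(x)$ on the right-hand side, so you avoid the four-point inequality~\eqref{eq:reshet} and the intermediate convergence $f_n(J_\lambda^n x)\to f(J_\lambda x)$, and you end with $\liminf_n f_n(x_n)\ge f_\lambda(x)$ for every $\lambda>0$; since $\sup_{\lambda>0}f_\lambda(x)=f(x)$ holds for \emph{all} $x\in\HH$ by~\eqref{conv_lambda}, this bypasses the paper's case distinction between $x\in\overline{\dom f}$ and $x\notin\overline{\dom f}$ together with the auxiliary functions $g_n=f_{n,\mu}$ and the semigroup property. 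Your asymptotic-center lemma, $\liminf_n\bigl(d(y,x_n)^2-d(x,x_n)^2\bigr)\ge d(x,y)^2$ for $x_n\weakto x$, is a correct quantitative strengthening (the paper only proves and needs nonnegativity, via subadditivity of $\limsup$), and your sketch via the strong convexity of $d(x_n,\cdot)^2$ along the geodesic from $x$ to $y$ is sound. Two minor points worth writing out in a final version: the case $\liminf_n f_n(x_n)=-\infty$ is excluded by the same estimate, since the right-hand side of your inequality stays bounded below; and the extension of~\eqref{enum:moreau-equiMinorization} to the finitely many initial indices, which you do address, is needed because Theorem~\ref{thm:metricgamma} requires the bound for every $n$.
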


Note that, for proper convex lsc functions $f_n$, the pointwise convergence $f_{n,\lambda} \to f(\lambda,\cdot)$ to some limit function $f(\lambda,\cdot)$, does not imply that $f(\lambda,\cdot)$ is a Moreau envelope of some proper convex lsc function; see~\cite[Remark 2.71]{Attouch84}.

\begin{proof}%
Observe that $f(x) \ge f_\lambda(x)\ge f\left(J_\lambda x\right).$ For $x\in\overline{\dom f},$ 
it holds by \cite[Proposition 2.2.26]{b2014} that $\lim_{\lambda \to +0} J_\lambda x = x$
and hence the lower semicontinuity of~$f$ implies
\begin{equation} \label{eq:step0}
 f(x) = \lim_{\lambda\to +0}  f_\lambda(x)= \lim_{\lambda\to +0}  f\left(J_\lambda x\right).		
\end{equation}

\textit{Step~1 (Limsup Inequality).}
 Let us show that, given $x\in\HH,$ there exists a sequence $y_n\to x$ 
 with $\limsup_{n \to \infty} f_n(y_n)\le f(x).$  If $f(x)=\infty,$ then there is nothing to prove. Assume therefore $x\in\dom f.$ Together with the assumption that $f_{n,\lambda}(x) \to f_\lambda(x)$ as $n \to \infty$, we obtain
\begin{equation*}
 f(x) = \lim_{\lambda \to +0} f_\lambda (x)
      = \lim_{\lambda \to +0} \lim_{n\to\infty} f_{n,\lambda}(x).
\end{equation*}
By the diagonalization lemma \cite[Corollary~1.18]{Attouch84}	
there exists a sequence $\{ \lambda_n\}_n$ with 	$\lim_{n\to \infty} \lambda_n	= 0$
such that 
\begin{align}
 f(x) &= \lim_{n\to\infty} f_{n,\lambda_n} (x) 	\notag \\
      &= \lim_{n\to\infty}     
        \left( f_n \left( J_{\lambda_n}^n x \right)
               + \tfrac{1}{2\lambda_n} d \left(x, J_{\lambda_n}^n x \right)^2
        \right),		\label{eq:moreauLn-conv-to-f} 
\end{align} 
where we denote $J_{\lambda}^n \colon= J_{\lambda}^{f_n}.$ Hence $f(x)\geq \limsup_{n \to \infty} f_n \left(J_{\lambda_n}^n x\right).$ We put $y_n := J_{\lambda_n}^n x$ and show that $y_n\to x.$ Indeed, inserting~\eqref{enum:moreau-equiMinorization}
into~\eqref{eq:moreauLn-conv-to-f}, we have
\begin{equation*}
  f(x) \geq \limsup_{n\to\infty}
    \left( \bigl( \tfrac{1}{2\lambda_n} - r \bigr) d(x,y_n)^2 - r \right)
\end{equation*}
and we can conclude that $y_n\to x.$

\textit{Step 2.} Let us show that $J_\lambda^n x\to J_\lambda x.$ 
From the previous step, we know that there exists a sequence 
$y_n\to J_\lambda x$ with 
$\limsup_{n \to \infty} f_n(y_n)\le f\left(J_\lambda x\right).$ 
Then we obtain
\begin{equation*}
f_\lambda(x) = f\left(J_\lambda x\right)+\tfrac{1}{2\lambda}d\left(x,J_\lambda x\right)^2  
\geq \limsup_{n\to\infty} \left( f_n(y_n)+\tfrac{1}{2\lambda} d\left(x,y_n\right)^2  \right)
\end{equation*}
and by employing~\eqref{eq:ineq} we arrive at
\begin{equation*}
f_\lambda(x)  \geq \limsup_{n\to\infty} \left( f_{n,\lambda}(x) +\tfrac{1}{2\lambda}d\left(J_\lambda^n x,y_n\right)^2  \right) 
= f_\lambda (x)+\limsup_{n\to\infty} \tfrac{1}{2\lambda} d\left(J_\lambda^n x,y_n\right)^2.
\end{equation*}
Hence we conclude $J_\lambda^n x\to J_\lambda x.$
 
\textit{Step 3 (Liminf Inequality).} 
Let $x_n\weakto x.$ We have to prove $\liminf_{n \to \infty} f_n(x_n)\geq f(x).$ 
By the definition of the Moreau envelope and \eqref{eq:ineq} we have
 \begin{align*}
  f_n(x_n) 
	& \geq f_n\left(J_\lambda^n x_n\right)
    +\tfrac1{2\lambda} d\left(x_n,J_\lambda^n x_n\right)^2 \nonumber \\
  & \geq f_n\left(J_\lambda^n x\right)+\tfrac1{2\lambda}d\left(x,J_\lambda^n x\right)^2
    +\tfrac1{2\lambda}d\left(J_\lambda^n x_n,J_\lambda^n x\right)^2
    +\tfrac1{2\lambda} d\left(x_n,J_\lambda^n x_n\right)^2
    -\tfrac1{2\lambda}d\left(x,J_\lambda^n x_n\right)^2. 
	\end{align*}	
	By the nonpositive curvature inequality in \eqref{eq:reshet} we obtain
	\begin{align}
  f_n(x_n)
  & \geq 
	f_n\left(J_\lambda^n x\right)+\tfrac1{2\lambda}d\left(J_\lambda^n x,x_n\right)^2
    -\tfrac1{2\lambda}d\left(x,x_n\right)^2. \label{eq:moscoHad-liminf}
 \end{align}
Let us show that $f_n\left(J_\lambda^n x\right)$ converges as $n \to \infty$.
Consider
\begin{alignat*}{10}
 f_{n,\lambda}(x) &= f_n(J_\lambda^n x) & &+ \trez{2\lambda} \di(x, J_\lambda^n x)^2,\\
 f_{\lambda}(x)   &= f(J_\lambda x)     & &+ \trez{2\lambda} \di(x, J_\lambda x)^2.
\end{alignat*}
By assumption we have $f_{n,\lambda} (x) \to f_\lambda (x)$,
and by Step~2 also $J_\lambda^n x \to J_\lambda x$ as $n\to\infty$. This implies
\begin{equation}		\label{eq:atProxConv}
  f_n(J_\lambda^n x)\to f(J_\lambda x). 
\end{equation}
By the definition of the weak limit of $\{x_n\}_n$,
for every subsequence $n_k \to \infty$, we have 
\begin{equation*} 
\limsup_{k\to\infty} \left(\di(J_\lambda x, x_{n_k} )^2  - \di(x, x_{n_k})^2\right)\ge\limsup_{k\to\infty} \di(J_\lambda x, x_{n_k} )^2  -\limsup_{k\to\infty} \di(x, x_{n_k})^2\ge0.
\end{equation*}
As the subsequence was arbitrary, together with Step~2, this implies
\begin{equation*}		\label{eq:moscoHad-liminf-di}
  \liminf_{n \to\infty} \left( \di(J_\lambda^n x, x_{n} )^2  - \di(x, x_{n})^2 \right)
  \geq 0.
\end{equation*}
Returning to~\eqref{eq:moscoHad-liminf}, the previous inequality
and~\eqref{eq:atProxConv} yield
\begin{equation*}
 \liminf_{n \to \infty} f_n(x_n) \geq f(J_\lambda x).
\end{equation*}
If $x \in \overline{\dom f}$, then from \eqref{eq:step0} we obtain
\begin{equation*}
 \liminf_{n\to\infty} f_n(x_n)\ge f(x).		
\end{equation*}
For $x \not \in \overline{\dom f}$ we can repeat the above conclusions
for the finite continuous convex functions $g_n := f_{n,\mu}$ and $g = f_\mu$ for some fixed $\mu >0$
instead of $f_n$ and $f$.
Note that the assumptions are fulfilled by the semigroup property of the Moreau envelopes.
Finally we let $\mu \to + 0$ and invoke \eqref{conv_lambda}.
This concludes the proof. 
\end{proof}

\begin{remark} \label{lambdak}
One can easily check that Theorem~\ref {thm:moreau2mosco} remains true if the assumption 
is made just for a sequence $\{ \lambda_k\}_{k \in \mN}$ of positive numbers which
strictly decreases to $0$, instead of all $\lambda >0$. This observation will be important in the proof of Theorem~\ref{theo:final}.
\end{remark}

Recall that a sequence of convex closed sets $C_n\subset\HH$ converges to a convex closed set $C\subset\HH$ 
in the sense of Frol\'{i}k-Wijsman if the respective distance functions converge pointwise; that is, 
if $d(x, C_n)\to d(x,C)$ for each $x\in\HH.$ This concept originated in \cite{frolik,wijsman}. 
On the other hand, a sequence of convex closed sets $C_n\subset\HH$ converges to a convex closed set $C\subset\HH$ 
in the sense of Mosco if the indicator functions $\iota_{C_n}$ converge in the sense of Mosco to the indicator function~$\iota_C.$ The following is a~direct consequence of our main result.
\begin{corollary}[Frol\'{i}k-Wijsman convergence] \label{cor:sets}
A sequence of convex closed sets $C_n\subset\HH$ converges to a convex closed set $C\subset\HH$ 
in the sense of Frol\'{i}k-Wijsman if and only if it converges to $C$ in the sense of Mosco.
\end{corollary}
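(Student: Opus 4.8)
The plan is to deduce Corollary~\ref{cor:sets} directly from Theorem~\ref{thm:moreau2mosco} together with the already-generalized forward implication Theorem~\ref{thm:mosco2moreau}, the point being that for an indicator function the Moreau envelope is (up to the factor $\tfrac1{2\lambda}$) exactly the squared distance function. Concretely, for a nonempty convex closed set $C\subset\HH$ and $\lambda>0$ one has $\iota_C\in\Gamma_0(\HH)$, and from the definition \eqref{eq:moreau},
\begin{equation*}
 (\iota_C)_\lambda(x) = \min_{y\in\HH}\Bigl\{\iota_C(y)+\tfrac1{2\lambda}d(x,y)^2\Bigr\}
 = \tfrac1{2\lambda}\,\inf_{y\in C} d(x,y)^2 = \tfrac1{2\lambda}\,d(x,C)^2,
\end{equation*}
the infimum being attained because $C$ is closed and convex in a Hadamard space (the metric projection onto a closed convex set exists and is unique). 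Thus $d(x,C_n)\to d(x,C)$ for all $x\in\HH$ is equivalent to $(\iota_{C_n})_\lambda(x)\to(\iota_C)_\lambda(x)$ for all $x\in\HH$ and all $\lambda>0$ (the equivalence across all $\lambda$ being trivial here, since the $\lambda$-dependence is just the common scalar $\tfrac1{2\lambda}$).

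With this dictionary in hand the proof is a short two-way implication. For the "only if" direction, assume $C_n\to C$ in the Frol\'ik--Wijsman sense; then by the displayed identity $(\iota_{C_n})_\lambda\to(\iota_C)_\lambda$ pointwise for every $\lambda>0$, and since $\iota_C\in\Gamma_0(\HH)$, Theorem~\ref{thm:moreau2mosco} yields $\iota_{C_n}\moscoTo\iota_C$, i.e. $C_n\to C$ in the Mosco sense. For the "if" direction, assume $\iota_{C_n}\moscoTo\iota_C$; then Theorem~\ref{thm:mosco2moreau} (which requires only that the Mosco limit be a proper function, and $\iota_C$ is proper since $C\neq\emptyset$) gives $(\iota_{C_n})_\lambda(x)\to(\iota_C)_\lambda(x)$ for every $x\in\HH$ and $\lambda>0$, hence $d(x,C_n)^2\to d(x,C)^2$ and so $d(x,C_n)\to d(x,C)$ for every $x$, which is Frol\'ik--Wijsman convergence.

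The only points requiring a word of care, rather than genuine obstacles, are the following. First, one should note that the sets in question are assumed nonempty (implicit in "convex closed set" here, and needed so that $\iota_C$ is proper and $d(\cdot,C)$ is finite-valued); if the statement is meant to allow $C=\emptyset$ one must add the corresponding convention, but in the Hadamard setting the natural reading is nonempty. Second, one uses that $\iota_C$ is lower semicontinuous precisely because $C$ is closed, and convex precisely because $C$ is convex (a set $C$ is convex in the metric sense iff it contains the geodesic joining any two of its points, which is exactly what makes $\iota_C$ satisfy \eqref{eq:conv}); these are the membership conditions for $\iota_C\in\Gamma_0(\HH)$ that both invoked theorems need. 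I do not expect any real obstacle: the whole content has already been packaged into Theorems~\ref{thm:mosco2moreau} and~\ref{thm:moreau2mosco}, and the corollary is the observation that for indicator functions "pointwise convergence of Moreau envelopes" is literally "pointwise convergence of squared distance functions." If one wished to be fully self-contained one could alternatively prove the "if" direction by hand from the definition of Mosco convergence of $\iota_{C_n}$ — the liminf condition controls $d(x,C)\le\liminf d(x,C_n)$ along weakly convergent approximants and the limsup condition the reverse — but routing through Theorem~\ref{thm:mosco2moreau} is cleaner and is the intended argument.
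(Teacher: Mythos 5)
Your proposal is correct and follows essentially the same route as the paper: the paper's own proof simply observes that the Moreau envelope of $\iota_C$ (with $\lambda=\tfrac12$) is the squared distance function $d(\cdot,C)^2$ and then applies Theorems~\ref{thm:mosco2moreau} and~\ref{thm:moreau2mosco}, which is exactly your argument with the scalar $\tfrac{1}{2\lambda}$ made explicit. The extra care you take (properness, convexity and lower semicontinuity of $\iota_C$, and that the trivial $\lambda$-dependence gives the envelope convergence for all $\lambda>0$) is fine but not a departure from the intended proof.
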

\begin{proof}
 Observe that the Moreau envelope of $\iota_C$ with $\lambda=\frac12$ is precisely the distance function squared $d(\cdot,C)^2$ 
 and apply Theorems \ref{thm:mosco2moreau} and \ref{thm:moreau2mosco}.
\end{proof}

\section{Topology of the Mosco convergence} \label{sec:topo}
Let $\left(\HH,d\right)$ be a \emph{separable} Hadamard space and  $\Gamma_0(\HH)$  
the convex cone of proper convex lsc functions on $\HH$.
We will show that there exists a complete metric $\rho$ on $\Gamma_0(\HH)$ such that 
\begin{equation} \label{eq:moscometric}
 f_n\moscoTo f \quad \text{if and only if } \rho\left(f_n,f\right)\to0
\end{equation}
as $n \to \infty$.
The Hilbert space case of this result was proved in \cite[Theorem 3.36]{Attouch84}. 
Let $\{ \lambda_k\}_{k \in \mN}$ be a~sequence of positive numbers which
strictly decreases to $0.$ Following~\cite[Section 3.5.1]{Attouch84} we consider the family of pseudometrics
\begin{equation} \label{gamma-topo}
 e_{\lambda_k,x}(f,g) := \left| f_{\lambda_k}(x) - g_{\lambda_k}(x) \right|,
\end{equation}
indexed by $x\in\HH$ and $k \in \mN$.
By the equivalence of Mosco convergence with the convergence of Moreau envelopes 
shown in the previous section, 
the above family of pseudometrics gives a~topology corresponding to Mosco convergence 
on $\Gamma_0(\HH)$.
We will show that the space $\Gamma_0(\HH)$ together with the topology 
induced by the family~\eqref{gamma-topo} is metrizable.
The proof relies on the equi-Lipschitz property of the sequences $\{f_{n,\lambda}\}_n$ 
which is the content of the following lemma.
\begin{lemma}\label{thm:moreau-conv-and-lipschitz}
Let $\{f_n\}_{n}\subset\Gamma_0(\HH)$ and suppose that there exist $0 < \lambda_0 < \lambda_1,$ a point $x_0 \in \HH$ and $f_0, f_1 \in \mR$ such that the Moreau envelopes fulfill
\begin{equation*}
 \lim_{n\to\infty} f_{n,{\lambda_i}}(x_0) = f_i,
\end{equation*}
for~$i=0,1.$ Then for each $\lambda\in\left(0,\lambda_0\right)$ and $R>0$ there exists a Lipschitz constant $L>0$ such that
\begin{equation*}
\left|f_{n,\lambda}(x) - f_{n,\lambda}(y)\right|< L d(x, y),
\end{equation*}
for every $x, y \in B_R(x_0)$ and $n\in\mN.$
\end{lemma}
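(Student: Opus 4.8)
The plan is to reduce to the local Lipschitz estimate of Lemma~\ref{lem:continuity-M-Y} and to make it uniform in $n$. Recall from the proof of that lemma that, for a single $g\in\Gamma_0(\HH)$, the envelope $g_\lambda$ is Lipschitz on $B_R(x_0)$ with constant $(C+R)/\lambda$ as soon as $d(z,J_\lambda^g z)\le C$ for all $z\in B_R(x_0)$. So it suffices to exhibit one $C=C(\lambda,\lambda_0,\lambda_1,R)$, independent of $n$, with $d(z,J_\lambda^n z)\le C$ for every $z\in B_R(x_0)$ and every $n$, where $J_\lambda^n:=J_\lambda^{f_n}$; then $L:=(C+R)/\lambda$ works (enlarge $L$ slightly for the strict sign when $x\neq y$).

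The first step is a uniform quadratic lower bound for the $f_n$ with the \emph{right} coefficient. From $f_{n,\lambda_1}(x_0)=\inf_x\bigl\{f_n(x)+\frac{1}{2\lambda_1}d(x,x_0)^2\bigr\}$ we get $f_n(x)\ge f_{n,\lambda_1}(x_0)-\frac{1}{2\lambda_1}d(x,x_0)^2$, and since the sequence $f_{n,\lambda_1}(x_0)$ converges it is bounded below by some $m\in\mR$; thus
\[
 f_n(x)\ \ge\ m-\tfrac{1}{2\lambda_1}\,d(x,x_0)^2 \qquad\text{for all } n\in\mN,\ x\in\HH .
\]
This is the quantitative form of Lemma~\ref{thm:moreau-equiMinorization}, used now at the larger parameter $\lambda_1$. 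Writing $w_n:=J_{\lambda_0}^n x_0$ and combining the identity $f_{n,\lambda_0}(x_0)=f_n(w_n)+\frac{1}{2\lambda_0}d(x_0,w_n)^2$ with the bound above and the boundedness of $f_{n,\lambda_0}(x_0)$, the two quadratic terms in $d(x_0,w_n)$ do not cancel because $\lambda_0<\lambda_1$, and one solves for $d(x_0,w_n)\le D$ with $D$ independent of $n$. In particular $f_n(w_n)\le f_{n,\lambda_0}(x_0)$ is bounded above, and for $z\in B_R(x_0)$,
\[
 f_{n,\lambda}(z)\ \le\ f_n(w_n)+\tfrac{1}{2\lambda}\,d(z,w_n)^2\ \le\ f_{n,\lambda_0}(x_0)+\tfrac{1}{2\lambda}(R+D)^2\ =:\ M_1 ,
\]
uniformly in $n$ and in $z\in B_R(x_0)$.

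Finally I would bound $d(z,J_\lambda^n z)$. Put $v_n:=J_\lambda^n z$ and $t:=d(z,v_n)$. From $f_{n,\lambda}(z)=f_n(v_n)+\frac{1}{2\lambda}t^2\le M_1$, the lower bound $f_n(v_n)\ge m-\frac{1}{2\lambda_1}d(v_n,x_0)^2\ge m-\frac{1}{2\lambda_1}(t+R)^2$, and $\lambda<\lambda_1$, one gets a quadratic inequality in $t$ with positive leading coefficient $\frac{1}{2\lambda}-\frac{1}{2\lambda_1}$, hence $t\le C$ for some $C=C(\lambda,\lambda_1,R,M_1,m)$. Plugging this $C$ into the estimate of Lemma~\ref{lem:continuity-M-Y} completes the proof. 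The crux --- and the reason the hypothesis carries two scales $\lambda_0<\lambda_1$ while the conclusion is restricted to $\lambda<\lambda_0$ --- is precisely that the available quadratic lower bound on $f_n$ must have coefficient $\frac{1}{2\lambda_1}$ strictly smaller than both $\frac{1}{2\lambda_0}$ (to keep $J_{\lambda_0}^n x_0$ in a fixed ball) and $\frac{1}{2\lambda}$ (to keep $J_\lambda^n z$ in a fixed ball); without that strict domination the proximal points could escape to infinity and no uniform Lipschitz constant could exist.
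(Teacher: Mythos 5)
Your proof is correct, and its overall architecture coincides with the paper's: first obtain a bound $d(z,J^n_\lambda z)\le C$ uniform in $n$ and in $z\in B_R(x_0)$, then rerun the computation of Lemma~\ref{lem:continuity-M-Y} with this uniform $C$ to get the Lipschitz constant $(C+R)/\lambda$; your Step~1 bound on $d\left(x_0,J^n_{\lambda_0}x_0\right)$, exploiting the two scales $\lambda_0<\lambda_1$, is the same computation as the paper's. Where you genuinely diverge is in how the prox-distance bound is propagated from the base point $(x_0,\lambda_0)$ to an arbitrary $(z,\lambda)$ with $z\in B_R(x_0)$: the paper uses the nonexpansiveness of $J^n_{\lambda_0}$ together with the monotonicity of $\lambda\mapsto d\left(J^n_\lambda x,x\right)$ (cited from \cite[Proof of Theorem~2.2.25]{b2014}), whereas you use the uniform quadratic minorization $f_n\ge m-\tfrac{1}{2\lambda_1}d(\cdot,x_0)^2$ (the quantitative form of Lemma~\ref{thm:moreau-equiMinorization} at scale $\lambda_1$) plus a uniform upper bound on $f_{n,\lambda}$ over the ball, and then solve a quadratic inequality in $t=d(z,J^n_\lambda z)$. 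Your route is more self-contained (it needs neither nonexpansiveness nor the monotonicity fact) and, as a bonus, it actually proves the conclusion for all $\lambda\in(0,\lambda_1)$, since the only strict inequality you need at the last step is $\tfrac{1}{2\lambda}>\tfrac{1}{2\lambda_1}$; your closing remark that $\lambda<\lambda_0$ is forced is therefore slightly off --- that restriction is what the paper's monotonicity argument uses, not yours. Two cosmetic points: in your definition of $M_1$ replace $f_{n,\lambda_0}(x_0)$ by a uniform bound such as $\sup_n f_{n,\lambda_0}(x_0)$, since $M_1$ must not depend on $n$; and, as in the paper, the estimate obtained is really $\le L\,d(x,y)$, so the strict sign in the statement should be read accordingly (your suggestion to enlarge $L$ handles $x\ne y$).
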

\begin{proof} 
Denote by $J_\lambda^n$ the proximal mapping of~$f_n.$ 

Step 1: By assumption we have for $n_0 \in \mN$ sufficiently large and each $n \ge n_0$ that $|f_{n,\lambda_i} (x_0) - f_i| \le 1$ for $i=0,1.$ Hence we get
\begin{equation*}
f_n \left(J^n_{\lambda_0} x_0 \right) + \trez{2\lambda_0} d\left(J^n_{\lambda_0} x_0,x_0 \right)^2 \le f_0 + 1
\end{equation*}
and 
\begin{equation*}
f_n \left(J^n_{\lambda_0}  x_0 \right) + \trez{2\lambda_1}d \left(J^n_{\lambda_0} x_0,x_0 \right)^2 \ge f_{n,\lambda_1} (x_0) \ge f_1 - 1.
\end{equation*}
Summing up the above inequalities yields
\begin{equation*}
 \bigl( \trez{2\lambda_0}  - \trez{2\lambda_1} \bigr) d \left(J^n_{\lambda_0} x_0,x_0 \right)^2 \le 
2 + f_0 - f_1,
\end{equation*}
for every $n \ge n_0.$ Observe then that $d \left(J^n_{\lambda_0} x_0,x_0 \right)$ is bounded and therefore, given $R > 0,$ there exists a constant $C>0$ such that $d \left(J^n_{\lambda} x,x\right)\le d \left(J^n_{\lambda_0} x,x\right)\le C$ for every $x \in B_{R}(x_0), \lambda\in\left(0,\lambda_0\right),$ and $n\in\mN.$
Here we used the simple fact that $\lambda \mapsto \di( J^n_\lambda x, x)$ is increasing,
see e.g.~\cite[Proof of Theorem~2.2.25]{b2014}.

Step 2: The proof follows the lines of Lemma~\ref{lem:continuity-M-Y}, where only a single function is considered instead of a sequence of functions. Choose  $x,y \in B_{R}(x_0)$ and $\lambda\in\left(0,\lambda_0\right).$ Then by the definition of envelopes, for each $n\in\mN,$ we have
\begin{align*}
 f_{n,\lambda}(y) & \le f_n\left( J^n_{\lambda}x\right)+\trez{2\lambda}d\left(y,J^n_{\lambda}x\right)^2 \\ 
&\le f_{n,\lambda}(x) -\trez{2\lambda}d\left(x,J^n_{\lambda}x\right)^2+\trez{2\lambda}d\left(y,J^n_{\lambda}x\right)^2 \\  
&=  f_{n,\lambda}(x)
+\trez{2\lambda} \bigl( \di(y, J^n_{\lambda}x) - \di(x, J^n_{\lambda}x)\bigr)
                            \bigl(\di(y, J^n_{\lambda}x) + \di(x, J^n_{\lambda}x)\bigr)\\
&\le f_{n,\lambda}(x) +\trez{2\lambda}d(x,y)\left(2d\left(x,J^n_{\lambda}x\right)+d(x,y)\right) \\  
&\le f_{n,\lambda}(x) +\frac{C+R}{\lambda}d(x,y),
\end{align*}
where we used the triangle inequality for the third inequality.
Interchanging $x$ and $y$ yields
\begin{equation*}
 \left| f_{n,\lambda}(x)-f_{n,\lambda}(y)\right| \le \frac{C+R}{\lambda}d(x,y),
\end{equation*}
which finishes the proof.
\end{proof}

Equi-Lipschitz sequences have the following property.
\begin{remark} \label{rem:equilipschitz}
Let $X$ and $Y$ be metric spaces and $\{g_n\}_n$ be a sequence of equi-Lipschitz mappings $g_n\colon X\to Y$ with Lipschitz constant $L > 0.$ Furthermore, let $Z\subset X$ be a dense set such that $\lim_{n\to\infty} g_n(z) = G(z)$ exists for every $z \in Z$. Then the limit
\begin{equation*} 
	\lim_{n\to\infty} g_n(x) = G(x)
\end{equation*}
exists for each $x \in X$ and the mapping $G\colon X\to Y$ is also $L$-Lipschitz. 
\end{remark}



The proof of the following lemma follows the lines of \cite[Theorem 3.36]{Attouch84}.
\begin{lemma} \label{prop:metrics_1}
The space $\Gamma_0(\HH)$ together with the topology induced by the family~\eqref{gamma-topo} is metrizable.
\end{lemma}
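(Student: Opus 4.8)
The plan follows the template of~\cite[Theorem~3.36]{Attouch84}. Fix a countable dense set $D=\{z_j\}_{j\in\mN}\subset\HH$ (available since $\HH$ is separable) and pass to the countable subfamily $\mathcal F'=\{e_{\lambda_k,z_j}\colon j,k\in\mN\}$ of~\eqref{gamma-topo}. First I would check that $\mathcal F'$ separates the points of $\Gamma_0(\HH)$: if $f_{\lambda_k}(z_j)=g_{\lambda_k}(z_j)$ for all $j,k$, then, each $f_{\lambda_k}$ being continuous by Lemma~\ref{lem:continuity-M-Y} and $D$ being dense, $f_{\lambda_k}=g_{\lambda_k}$ on all of $\HH$, and letting $k\to\infty$ and using~\eqref{conv_lambda} gives $f=g$. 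Consequently $\rho(f,g):=\sum_{j,k}2^{-j-k}\tfrac{e_{\lambda_k,z_j}(f,g)}{1+e_{\lambda_k,z_j}(f,g)}$ is a genuine metric on $\Gamma_0(\HH)$ which induces the topology generated by $\mathcal F'$, so it remains only to show that this topology coincides with the one generated by the full family~\eqref{gamma-topo}. One inclusion is immediate, since $\mathcal F'$ is a subfamily; for the converse it suffices to prove that each pseudometric $e_{\lambda_k,x}$, now with $x\in\HH$ \emph{arbitrary}, is continuous with respect to $\mathcal F'$.

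This is the heart of the matter. Fix $x\in\HH$, $k\in\mN$, $\varepsilon>0$ and $f\in\Gamma_0(\HH)$; I want an $\mathcal F'$-basic neighbourhood $V$ of $f$ such that $g\in V$ implies $|f_{\lambda_k}(x)-g_{\lambda_k}(x)|<\varepsilon$. The engine is a Lipschitz estimate for the Moreau envelopes of the \emph{competitors} $g$ that is uniform in $g$: repeating the computation in Step~1 of Lemma~\ref{thm:moreau-conv-and-lipschitz} (controlling $g_{\lambda_k}$ from above and $g_{\lambda_{k-1}}$ from below, and using that $\lambda\mapsto d(z,J_\lambda^g z)$ is increasing), one sees that if $V$ forces $g_{\lambda_k}(z_j)$ and $g_{\lambda_{k-1}}(z_j)$ to stay within $1$ of the corresponding values of $f$ at finitely many $z_j\in D$, then $d(z_j,J_{\lambda_k}^g z_j)$ is bounded by a constant depending only on $f$ and the fixed data, yielding a Lipschitz constant $L$ for $g_{\lambda_k}$ on small balls around those $z_j$ that is independent of $g$. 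For $k\ge 2$ this is enough: choosing $z_j\in D$ close to $x$, an $\varepsilon/3$-split of $|f_{\lambda_k}(x)-g_{\lambda_k}(x)|$ through the value at $z_j$ does it, two terms being controlled by $L$ and the local Lipschitz constant of $f_{\lambda_k}$ from Lemma~\ref{lem:continuity-M-Y}, the remaining term by shrinking the tolerance in $V$. For $k=1$ there is no larger $\lambda_{k-1}$ in the sequence, so I would instead use the semigroup property to write $f_{\lambda_1}=(f_{\lambda_2})_{\lambda_1-\lambda_2}$ and run the previous argument with $\lambda_2$ (which does have a larger sibling $\lambda_1$): this makes $g_{\lambda_2}$ uniformly close to $f_{\lambda_2}$ on a prescribed ball $B_N(x)$ — cover $B_N(x)$ by finitely many small balls centred in $D$ — and one then checks that the minimisation in $g_{\lambda_1}(x)=\min_{y}\{g_{\lambda_2}(y)+\tfrac1{2(\lambda_1-\lambda_2)}d(x,y)^2\}$ may be restricted to a ball $B_{N_0}(x)$ independent of $g$, because the uniform bound on $d(z_j,J_{\lambda_2}^g z_j)$ bounds the slope of the convex function $g_{\lambda_2}$ at $z_j$ (the gradient of a Moreau envelope at a point has norm $d(\cdot,J_{\lambda_2}^g\cdot)/\lambda_2$) and hence, together with a uniform quadratic lower bound of the type in Lemma~\ref{thm:moreau-equiMinorization}, makes these inf-convolutions equi-coercive; comparing them on $B_{N_0}(x)$ gives $|f_{\lambda_1}(x)-g_{\lambda_1}(x)|<\varepsilon$.

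I expect the main obstacle to be exactly this uniformity. The estimates behind Lemmas~\ref{thm:moreau-conv-and-lipschitz} and~\ref{thm:moreau-equiMinorization} must be invoked in a quantitative form whose constants depend only on the fixed $f$ (and on $x,\lambda_k,\varepsilon$) and not on the competitor $g$ — which is why one works with the explicit bounds inside the proof of Lemma~\ref{thm:moreau-conv-and-lipschitz} rather than with an abstract equi-continuity statement for a sequence — together with the extra bookkeeping needed for the largest parameter $\lambda_1$, where the direct Lipschitz bound is unavailable and one must detour through the semigroup identity, a covering argument, and an equi-coercivity estimate for the inf-convolution. Once $e_{\lambda_k,x}$ has been shown $\mathcal F'$-continuous for every $x\in\HH$ and $k\in\mN$, the two topologies agree, so the metric $\rho$ metrizes the topology induced by~\eqref{gamma-topo}, which is the assertion.
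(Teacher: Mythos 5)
Your overall route is the paper's: pass to the countable subfamily over a dense set, check separation of points, and establish uniform local Lipschitz bounds for the competitors' envelopes by the two-parameter comparison from the proof of Lemma~\ref{thm:moreau-conv-and-lipschitz}. For $k\ge 2$ your argument (bound $d(z_j,J^{g}_{\lambda_k}z_j)$ from control of $g_{\lambda_k}$ and $g_{\lambda_{k-1}}$ at a single dense point, then the Step-2 computation and an $\varepsilon/3$-split using Lemma~\ref{lem:continuity-M-Y}) is sound and is essentially the paper's proof recast in neighbourhood rather than sequential form (the paper invokes Lemma~\ref{thm:moreau-conv-and-lipschitz} together with Remark~\ref{rem:equilipschitz} for each fixed $\lambda_k$); you are also right that the largest parameter $\lambda_1$ is not covered by that lemma, a point the paper does not address separately.

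The genuine gap is in your $\lambda_1$-branch. You want $g_{\lambda_2}$ to be uniformly close to $f_{\lambda_2}$ on a whole ball $B_N(x)$ and propose to enforce this by covering $B_N(x)$ with finitely many small balls centred in the dense set. In a general Hadamard space closed bounded balls are not totally bounded (already for $\HH$ an infinite-dimensional Hilbert space), so no such finite cover exists; and since a basic neighbourhood in the topology generated by the countable family constrains $g$ at only finitely many points, uniform closeness on a ball cannot be forced this way. The step can be repaired, and in fact by your own slope idea, provided it is phrased intrinsically: in a Hadamard space one has $\lvert\partial g_{\lambda_2}\rvert(z)\le d\left(z,J^{g}_{\lambda_2}z\right)/\lambda_2$ (the Hilbertian "gradient" formula needs this metric substitute), and by convexity a slope bound at one dense point $z_0$ near $x$ gives a global linear minorant of $g_{\lambda_2}\le g$ with constants uniform over the neighbourhood. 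Together with the one-point upper bound $g_{\lambda_1}(x)\le g_{\lambda_2}(z_0)+\tfrac{1}{2(\lambda_1-\lambda_2)}d(x,z_0)^2\le f_{\lambda_2}(z_0)+1+\tfrac{1}{2(\lambda_1-\lambda_2)}d(x,z_0)^2$, this yields a uniform bound on $d\left(x,J^{g}_{\lambda_1}x\right)$ for all $g$ in the neighbourhood, after which the same one-point-plus-uniform-Lipschitz argument you use for $k\ge2$ closes the case with no covering at all. Alternatively, and closer to the paper's toolkit, since the countable-family topology is metrizable one may argue sequentially: convergence at the dense points upgrades (via Lemma~\ref{thm:moreau-conv-and-lipschitz} and Remark~\ref{rem:equilipschitz}) to convergence of $g_{n,\lambda_k}$ everywhere for all $k\ge2$, whence Theorem~\ref{thm:moreau2mosco} with Remark~\ref{lambdak} gives Mosco convergence to $g$, and Theorem~\ref{thm:mosco2moreau} then returns convergence of the $\lambda_1$-envelopes at every point.
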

\begin{proof}
We show that the family of pseudometrics~\eqref{gamma-topo} can be replaced by a \emph{countable} family 
which induces the same topology. To this end, we choose a countable dense set $\left\{ x_l\right\}_{l \in \mN} \subset\HH$. 
Let $\{f_n\}_n\subset \Gamma_0(\HH).$ We will show that if
\begin{equation} \label{discrete}
 f\left(\lambda_k,x_l\right):=\lim_{n\to\infty}f_{n,\lambda_k}\left( x_l \right)
\end{equation}
for every $k,l\in\mN,$ then, for every $k\in\mN,$ there exists a unique continuous function $f\left(\lambda_k,\cdot\right)$ such that
\begin{equation} \label{toshow}
 f\left(\lambda_k,x\right)=\lim_{n\to\infty}f_{n,\lambda_k}\left(x\right),
\end{equation}
for every $x\in\HH$. Indeed, for each fixed $\lambda_k,$ we obtain by Lemma~\ref{thm:moreau-conv-and-lipschitz} that
the functions $\left\{f_{n,\lambda_k}\right\}_n$ are locally equi-Lipschitz on $\HH$. 
By Remark~\ref{rem:equilipschitz}  this implies for each $\lambda_k$
that 
\begin{equation*}
 \lim_{n\to\infty}f_{n,\lambda_k}\left(x\right) =\phi(\lambda_k,x),\qquad x\in\HH,
\end{equation*}
where $\phi(\lambda_k,\cdot)$ is a locally Lipschitz function. 
By \eqref{discrete} we have $\phi\left(\lambda_k,x_l\right)=f\left(\lambda_k,x_l\right)$ 
and since $\{x_l\}_l$ is dense in $\HH$ we can put 
$f\left(\lambda_k,x\right) := \phi\left(\lambda_k,x\right)$ for every $x\in\HH$ which gives \eqref{toshow}.

We hence obtain that
\begin{equation} \label{metric_1}
\rho_e(f,g) := \sum_{k,l \in \mN^2} \frac{1}{2^{k+l}} \frac{e_{\lambda_k,x_l}(f,g)}{1+e_{\lambda_k,x_l}(f,g)}
\end{equation}
is a metric on $\Gamma_0(\HH)$.
\end{proof}

Unfortunately, since $\left\{f_{n,\lambda_k}\right\}_n$ does not in general converge to the
Moreau envelope of a~function from $\Gamma_0(\HH)$, 
the metric \eqref{metric_1} is not complete. This was observed already by Attouch \cite[p.\ 325]{Attouch84} along with a modification of the metric which results in a complete metric on $\Gamma_0(H),$ for $H$ a Hilbert space. Whereas his approach seems to work in linear spaces only, in Hadamard spaces one can instead define an updated family of pseudometrics by
\begin{equation} \label{metric_complete}
e_{{\lambda_k},x}(f,g) := \left|f_{\lambda_k}(x)-g_{\lambda_k}(x) \right| 
\quad {\rm and} \quad 
r_{\lambda,x}(f,g) :=d \left(J_{\lambda_k}^f(x),J_{\lambda_k} ^g(x)\right),
\end{equation}
for every $x\in\HH$ and $k \in \mN$. First observe that the family of pseudometrics \eqref{metric_complete} also induces the topology corresponding to Mosco convergence:
Indeed the family \eqref{metric_complete} induces a topology at least as strong
as the topology induced by the family \eqref{gamma-topo} and therefore at least as strong
as the Mosco topology, by the previous considerations. 
On the other hand we have by Theorem~\ref{thm:mosco2moreau}
that Mosco convergence implies convergence with respect to the pseudometrics in~\eqref{metric_complete}.
%
\begin{theorem} \label{theo:final}

The space $\Gamma_0(\HH)$ together with the Mosco topology admits a complete metric.
\end{theorem}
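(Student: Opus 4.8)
The plan is to realize the complete metric as the one built from the countable subfamily of the pseudometrics~\eqref{metric_complete} obtained by letting $x$ range over a countable dense set $\{x_l\}_{l\in\mN}\subset\HH$ (which exists since $\HH$ is separable), i.e.\
\begin{equation*}
 \rho(f,g):=\sum_{k,l\in\mN}\frac1{2^{k+l}}\left(\frac{e_{\lambda_k,x_l}(f,g)}{1+e_{\lambda_k,x_l}(f,g)}+\frac{r_{\lambda_k,x_l}(f,g)}{1+r_{\lambda_k,x_l}(f,g)}\right).
\end{equation*}
First I would check that $\rho$ metrizes the Mosco topology. That $\rho$ is not stronger than the topology of the family~\eqref{metric_complete} (hence, by the discussion preceding the theorem, not stronger than the Mosco topology) is clear; conversely, if $e_{\lambda_k,x_l}(f_n,f)\to0$ and $r_{\lambda_k,x_l}(f_n,f)\to0$ for all $k,l$, then Lemma~\ref{thm:moreau-conv-and-lipschitz} shows that $\{f_{n,\lambda_k}\}_n$ is locally equi-Lipschitz while $\{J_{\lambda_k}^{f_n}\}_n$ is equi-nonexpansive, so by Remark~\ref{rem:equilipschitz} the pointwise convergences $f_{n,\lambda_k}\to f_{\lambda_k}$ and $J_{\lambda_k}^{f_n}\to J_{\lambda_k}^f$ on the dense set upgrade to all of $\HH$; Theorem~\ref{thm:moreau2mosco} together with Remark~\ref{lambdak} then gives $f_n\moscoTo f$. (Point separation, needed for $\rho$ to be a metric, follows from the $e$-part together with~\eqref{conv_lambda}; the two largest parameters, for which Lemma~\ref{thm:moreau-conv-and-lipschitz} does not apply verbatim, are handled via the semigroup identity exactly as in the completeness argument below.)

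It remains to prove completeness. Let $\{f_n\}_n$ be $\rho$-Cauchy. Then $\{f_{n,\lambda_k}(x_l)\}_n$ is Cauchy in $\mR$ and $\{J_{\lambda_k}^{f_n}x_l\}_n$ is Cauchy in the complete space $\HH$ for all $k,l$; by the equi-Lipschitz, resp.\ equi-nonexpansive, property and Remark~\ref{rem:equilipschitz} these upgrade to pointwise limits
\begin{equation*}
 \phi_k(x):=\lim_{n\to\infty}f_{n,\lambda_k}(x),\qquad P_k(x):=\lim_{n\to\infty}J_{\lambda_k}^{f_n}x,\qquad x\in\HH,
\end{equation*}
with $\phi_k$ convex and locally Lipschitz and $P_k$ nonexpansive. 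Since $f_{n,\lambda_k}(x)=f_n(J_{\lambda_k}^{f_n}x)+\trez{2\lambda_k}d(x,J_{\lambda_k}^{f_n}x)^2$, the values $f_n(J_{\lambda_k}^{f_n}x)$ also converge, say to $\beta_k(x):=\phi_k(x)-\trez{2\lambda_k}d(x,P_k(x))^2$.

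The candidate limit is $f:=\sup_{k\in\mN}\phi_k$, which is convex and lsc automatically. The crucial point — and this is exactly where Attouch's linear construction does not carry over and the proximal pseudometrics $r_{\lambda_k,x}$ are indispensable — is to show that $f$ is proper and that $f_{\lambda_k}=\phi_k$ for every $k$. For the estimate $f_{\lambda_k}\le\phi_k$ and for properness I would pass to the limit in $f_{n,\lambda_j}(P_k(x))\le f_n(J_{\lambda_k}^{f_n}x)+\trez{2\lambda_j}d(P_k(x),J_{\lambda_k}^{f_n}x)^2$, using $J_{\lambda_k}^{f_n}x\to P_k(x)$ and $f_n(J_{\lambda_k}^{f_n}x)\to\beta_k(x)$, to obtain $\phi_j(P_k(x))\le\beta_k(x)$ for all $j$; hence $f(P_k(x))\le\beta_k(x)<\infty$, so $f$ is proper, and $f_{\lambda_k}(x)\le f(P_k(x))+\trez{2\lambda_k}d(x,P_k(x))^2\le\phi_k(x)$. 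For the reverse inequality I would pass to the limit in the semigroup identity $f_{n,\lambda_k}=(f_{n,\lambda_j})_{\lambda_k-\lambda_j}$, valid for $j>k$: the minimizer defining the right-hand side lies on the geodesic $[x,J_{\lambda_k}^{f_n}x]$ (this is how the semigroup property is established in Hadamard spaces), hence it converges strongly, and together with the equi-Lipschitz bound this yields $\phi_k=(\phi_j)_{\lambda_k-\lambda_j}$; since $\phi_j\le f$ we get $\phi_k\le f_{\lambda_k-\lambda_j}$, and letting $j\to\infty$ and using that $\mu\mapsto f_\mu(x)$ is continuous gives $\phi_k\le f_{\lambda_k}$. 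Thus $f\in\Gamma_0(\HH)$ and $f_{n,\lambda_k}\to f_{\lambda_k}$ pointwise for all $k$, so $f_n\moscoTo f$ by Theorem~\ref{thm:moreau2mosco} and Remark~\ref{lambdak}, i.e.\ $\rho(f_n,f)\to0$; hence $(\Gamma_0(\HH),\rho)$ is complete.

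I expect the main obstacle to be precisely the identification $f_{\lambda_k}=\phi_k$: showing that the pointwise limits of the Moreau envelopes along a $\rho$-Cauchy sequence are again the Moreau envelopes of a single \emph{proper} convex lsc function. This fails without the proximal pseudometrics (cf.~\cite[Remark 2.71]{Attouch84}), and it forces one to keep careful track of the strong convergence of the proximal points $J_{\lambda_k}^{f_n}x$ and of the geodesic structure underlying the semigroup property — features that the Hilbert-space proof of~\cite[Theorem 3.36]{Attouch84} does not require.
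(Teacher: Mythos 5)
Your proposal is correct and follows the paper's overall blueprint: the same metric built from the countable families $e_{\lambda_k,x_l}$ and $r_{\lambda_k,x_l}$ over a dense set, the same candidate limit $f=\sup_k\phi_k$ for a $\rho$-Cauchy sequence, and the same final appeal to Theorem~\ref{thm:moreau2mosco} with Remark~\ref{lambdak}. Where you genuinely diverge is the crucial identification $f_{\lambda_k}=\phi_k$ together with properness of $f$. The paper obtains this via $\Gamma$-convergence machinery: Lemma~\ref{thm:moreau-equiMinorization} and Theorem~\ref{thm:metricgamma} give $f_n\gammaTo f$, and then stability of $\Gamma$-limits under the continuous perturbation $\trez{2\lambda_k}d(x,\cdot)^2$ (\cite[Theorem 2.15]{Attouch84}) combined with the convergence of the minimizers $J^n_{\lambda_k}x$ and \cite[Theorem 1.10]{Attouch84} yields convergence of the minimal values, i.e.~\eqref{eq:envconv}, and the bound $f(\Phi(\lambda_k,x))\le\phi(\lambda_k,x)<\infty$. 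You instead argue directly: properness and $f_{\lambda_k}\le\phi_k$ by passing to the limit in $f_{n,\lambda_j}(P_k(x))\le f_n\left(J^n_{\lambda_k}x\right)+\trez{2\lambda_j}d\left(P_k(x),J^n_{\lambda_k}x\right)^2$, and $\phi_k\le f_{\lambda_k}$ from the semigroup identity, monotonicity of the envelope under $\phi_j\le f$, and continuity of $\mu\mapsto f_\mu(x)$; all of these steps check out. In fact your second step is over-engineered: only the trivial inequality $\phi_k\le(\phi_j)_{\lambda_k-\lambda_j}$ is needed, obtained by fixing $y$ and letting $n\to\infty$, so the discussion of where the minimizer of the iterated envelope sits on a geodesic (and the full identity $\phi_k=(\phi_j)_{\lambda_k-\lambda_j}$) can be dropped. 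What your route buys is a self-contained argument avoiding the cited $\Gamma$-convergence results and the equi-minorization lemma, at the cost of leaning more explicitly on the strong convergence of the proximal points--which both proofs extract from the $r$-pseudometrics anyway; the paper's route is shorter given Attouch's toolbox. Two cosmetic points: nonexpansiveness of $J^n_{\lambda_k}$ is a standing fact from Section~2 rather than an output of Lemma~\ref{thm:moreau-conv-and-lipschitz}, and the issue with the two largest parameters $\lambda_k$ is most simply resolved by discarding finitely many indices, which Remark~\ref{lambdak} permits--a point the paper itself leaves implicit.
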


\begin{proof}
For a dense set $\{x_l\}_{l \in \mN}$ of $\HH$ we consider the countable family
\begin{equation*}
 e_{{\lambda_k},x_l}(f,g) := \left|f_{\lambda_k}(x_l)-g_{\lambda_k}(x_l) \right| 
\quad {\rm and} \quad 
r_{{\lambda_k},x_l}(f,g) :=d \left(J_{\lambda_k}^f(x_l),J_{\lambda_k} ^g(x_l)\right)
\end{equation*}
for all $l,k \in \mN$. 
This family is nested between the family of pseudometrics~\eqref{metric_complete}
and the subfamily $\{e_{\lambda_k, x_l}\}_{k,l}$ defined in Lemma~\ref{prop:metrics_1},
which both induce the Mosco topology. Therefore this family induces the Mosco topology, too.

Define $\rho$ as in \eqref{metric_1} with the additional summands due to $r_{{\lambda_k},x_l}(f,g)$, where $k,l \in \mN$. Then~$\rho$ is a~metric on~$\Gamma_0(\HH);$ see Lemma~\ref{prop:metrics_1}. Let $\{f_n\}_n\subset\Gamma_0(\HH)$ be a Cauchy sequence with respect to $\rho$ and denote the corresponding proximal mappings by~$J_\lambda^n.$ Then, for every $k,l \in \mN$, there exist $\phi(\lambda_k,x_l)\in\mR$ and $\Phi(\lambda_k,x_l)\in\HH$ such that
\begin{equation*}
f_{n,\lambda_k} (x_l) \to \phi(\lambda_k,x_l), \quad\text{and}\quad J^n_{\lambda_k} (x_l) \to \Phi(\lambda_k,x_l),
\end{equation*}
as $n\to\infty.$ The functions $f_{n,\lambda_k}\colon\HH\to\mR$ are locally equi-Lipschitz (Lemma~\ref{thm:moreau-conv-and-lipschitz})
and the mappings $J^n_{\lambda_k}\colon \HH \to \HH$
are $1$-Lipschitz.
Therefore, by Remark~\ref{rem:equilipschitz}, the limits
\begin{equation} \label{eq:limpoints}
f_{n,\lambda_k} (x) \to \phi(\lambda_k,x), \quad\text{and}\quad J^n_{\lambda_k} (x) \to \Phi(\lambda_k,x),\qquad \text{as }n\to\infty,
\end{equation}
exist for every $k \in \mN$ and $x \in \HH$. Observe that the functions $\phi(\lambda_k,\cdot)$ are convex and locally Lipschitz. Define
\begin{equation} \label{limit-function}
 f(x) := \sup_{k \in \mN} \phi(\lambda_k, x),
\end{equation}
which is a convex lsc function. By Lemma~\ref{thm:moreau-equiMinorization}, the functions $f_n$ satisfies~\eqref{enum:moreau-equiMinorization} for some $x_0 \in \HH$ and $r > 0,$ which along with~\eqref{eq:limpoints} enables to invoke Theorem~\ref{thm:metricgamma} and conclude that $f_n \gammaTo f;$ as was observed already in \cite[Remark 2.71]{Attouch84}.

Next we show that
\begin{equation} \label{eq:envconv}
\lim_{n\to\infty} f_{n,\lambda_k}(x)= f_{\lambda_k}(x),
\end{equation}
for each $k\in\mN$ and all $x \in \HH.$ Using that $d(x,\cdot)^2$ is a continuous function on $\HH$, it follows directly from 
\cite[Theorem 2.15]{Attouch84} that
\begin{equation}\label{eq:gn-conv}
f_n + \trez{2{\lambda_k}} d(x, \cdot)^2  \quad \gammaTo \quad f + \trez{2{\lambda_k}} d(x,\cdot)^2,	
\end{equation}
for each $k \in \mN.$
By property~\cite[Theorem~1.10]{Attouch84} of  $\Gamma$-convergence
the limit $\Phi(\lambda_k, x)$ in~\eqref{eq:limpoints} is a minimizer of the right-hand side
of~\eqref{eq:gn-conv},
and the minimal values $f_{n,\lambda_k}(x)$
of the left-hand sides of~\eqref{eq:gn-conv} converge to the minimal value $f_{\lambda_k}(x)$
of the right-hand side, which implies~\eqref{eq:envconv}.


On account of Theorem~\ref{thm:moreau2mosco} and Remark~\ref{lambdak}, we conclude that $f_n\moscoTo f.$ 
Furthermore,
\begin{equation*}
	f\left(\Phi(\lambda_k,x)\right) \le \liminf_{n\to\infty} f_{n,\lambda_k}(x) 
 	= \phi(\lambda_k, x) < \infty,
\end{equation*}
and thus in particular $\dom f \neq\emptyset$.
Hence $f\in\Gamma_0(\HH)$ and the proof is complete.
\end{proof}

\subsection*{Acknowledgments} 
We would like to express our gratitude to all three referees for their useful comments. We are especially indebted 
to the referee who brought to our attention \cite[Theorem 3.36]{Attouch84} and suggested we prove its Hadamard space version.
\\
Funding by the German Research Foundation (DFG) within the project STE 571/13-1 is gratefully acknowledged.

\bibliographystyle{abbrv}
\bibliography{moscom}
\end{document}